\newtheorem{thm}{Theorem}[section]
\newtheorem{lem}[thm]{Lemma}
\newtheorem{cor}[thm]{Corollary}
\newtheorem{conj}[thm]{Conjecture}
\newtheorem{example}{Example}
\newtheorem{defn}[thm]{Definition}
\newtheorem{rem}[thm]{Remark}
\newtheorem{prop}[thm]{Proposition}
\begin{document}
\title{The Maximum Length of Circuit Codes With Long Bit Runs and a New Characterization Theorem}
\author{Kevin M. Byrnes
\thanks{E-mail:\texttt{dr.kevin.byrnes@gmail.com}}}
\maketitle

\begin{abstract}
We study circuit codes with long bit runs (sequences of distinct transitions) and derive a formula for the maximum length for an infinite class of symmetric circuit codes with long bit runs.  This formula also results in an improved lower bound on the maximum length for an infinite class of circuit codes without restrictions on symmetry or bit run length.  We also present a new characterization of circuit codes of spread $k$ based on a theorem of Deimer.  
\end{abstract}

\footnotesize
\textbf{Keywords: }Circuit Code, Snake in the Box, Coil in the Box, k-Coil, Error Correcting Code
\normalsize

\section{Introduction}
Let $I(d)$ denote the graph of the $d$-dimensional hypercube.  A simple cycle $C=(x_1,\ldots,x_N)$ in $I(d)$ is called a \emph{circuit}.  A circuit $C$ has three important characteristics: its ambient dimension $d$, its \emph{spread} $k$ which is the minimum distance in $I(d)$ two vertices $x_i$ and $x_j\in C$ can have if they are not ``close'' in $C$, and its length $N$.  Let $G$ be a subgraph of $I(d)$ and let $x$ and $y$ be vertices of $G$.  Define $d_G(x,y)$ as the minimum number of edges that need to be traversed in $G$ to travel from $x$ to $y$ (with $d_G(x,y) = \infty$ if no such path exists).  A circuit code of spread $k$ can then be formally defined as follows.

\begin{defn}
\label{def1}
A subgraph $C$ of $I(d)$ is a circuit code of spread $k$ (a $(d,k)$ circuit code)~if:
\begin{enumerate}
\setlength{\itemsep}{0pt}
\setlength{\parskip}{0pt}
\item $C$ is a circuit.
\item If $x$ and $y$ are vertices of $C$ with $d_{I(d)}(x,y)<k$ then $d_C(x,y)=d_{I(d)}(x,y)$.
\end{enumerate}
\end{defn}

A useful alternate characterization of a spread $k$ circuit code was given by Klee.
\begin{lem}[Klee \cite{Klee} Lemma 2]
\label{KleeLemma2}
A $d$-dimensional circuit code $C$ of length $N\ge 2k$ has spread $k$ if and only if for all vertices $x,y\in C$, ${d_C(x,y)\ge k \Rightarrow d_{I(d)}(x,y)\ge k}$.
\end{lem}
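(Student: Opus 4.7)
I would split the biconditional into two directions.

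For $(\Rightarrow)$, a one-line contrapositive using Definition~\ref{def1} suffices: if $d_C(x, y) \ge k$ under spread $k$, assuming $d_{I(d)}(x, y) < k$ would force $d_C(x, y) = d_{I(d)}(x, y) < k$. The length assumption plays no role in this direction.

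For $(\Leftarrow)$, assume the Klee implication, fix $x, y \in C$ with $m := d_{I(d)}(x, y) < k$, and set $a := d_C(x, y)$. The two easy observations are $a \ge m$ (since $C \subseteq I(d)$) and $a < k$ (the contrapositive of the hypothesis applied to $(x,y)$). The goal is $a = m$; I would argue by contradiction, supposing $a > m$. The central idea is to produce a pair of vertices at cycle distance \emph{exactly} $k$ by splicing the short $xy$-arc with a prefix of the complementary arc. Label the shorter arc $x = v_0, v_1, \ldots, v_a = y$ with bit-flips $b_1, \ldots, b_a$, and the complementary arc $y = u_0, u_1, \ldots, u_{N-a} = x$. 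Since $a < k \le N - a$, the vertex $u_{k-a}$ is well defined, and concatenating the short arc with $u_0, u_1, \ldots, u_{k-a}$ gives a length-$k$ walk from $x$ to $u_{k-a}$ inside $C$. The bound $N \ge 2k$ guarantees this walk is the shorter of the two cyclic arcs from $x$ to $u_{k-a}$, so $d_C(x, u_{k-a}) = k$; the Klee implication then forces $d_{I(d)}(x, u_{k-a}) \ge k$.

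The final step is a standard hypercube observation: a walk of length $\ell$ realising Hamming distance $\ell$ must flip $\ell$ pairwise distinct bits, since any repeated bit-flip cancels in the endpoint XOR and reduces Hamming weight by at least $2$. Combined with the upper bound $d_{I(d)}(x, u_{k-a}) \le k$ coming from the walk itself, this forces $b_1, \ldots, b_a, c_1, \ldots, c_{k-a}$ to be pairwise distinct, where $c_1, \ldots, c_{k-a}$ are the bits flipped along the initial segment of the complementary arc. In particular $b_1, \ldots, b_a$ are distinct, so $x \oplus y = e_{b_1} \oplus \cdots \oplus e_{b_a}$ has Hamming weight $a$, giving $d_{I(d)}(x, y) = a$ and contradicting $a > m$.

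The one non-routine move is recognising that the length bound $N \ge 2k$ lets us splice into the complementary arc to manufacture a pair at cycle distance exactly $k$, thereby converting Klee's hypothesis into a usable constraint; once that pair is in hand, the distinct-bit-flips principle finishes things mechanically. I expect the main care to be in checking that $u_{k-a}$ really lies on the complementary arc and that the $k$-step splice is indeed the shorter cyclic arc — both of which reduce to the inequalities $1 \le k - a$ and $N - k \ge k$.
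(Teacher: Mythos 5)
Your proof is correct. Note that the paper itself offers no proof of this statement --- it is imported verbatim as Klee's Lemma~2 --- so there is nothing internal to compare against; your argument stands as a self-contained verification of the cited result. Both directions check out: the forward direction is indeed the immediate contrapositive of Definition~\ref{def1}(2), and in the converse your splicing construction is sound. The key inequalities all hold: $a < k$ follows from the contrapositive of the hypothesis, $N \ge 2k$ and $a < k$ give $N - a > k$ so $u_{k-a}$ lies on the complementary arc, and the two cyclic arcs between $x$ and $u_{k-a}$ have lengths $k$ and $N-k \ge k$, so $d_C(x,u_{k-a}) = k$ exactly. The hypothesis then pins $d_{I(d)}(x,u_{k-a})$ at exactly $k$, the XOR/cancellation argument forces all $k$ flipped bits (in particular $b_1,\ldots,b_a$) to be distinct, and $d_{I(d)}(x,y) = a$ contradicts $a > m$. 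This is essentially the standard argument for Klee's lemma, and it correctly isolates where the hypothesis $N \ge 2k$ is needed (only in the converse, to guarantee the spliced arc is the shorter one).
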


Circuit codes were first introduced in \cite{Kautz}.  Since then they have been extensively studied as both combinatorial objects (generalizing the well-known Snake in the Box problem \cite{KleeMAA}) and as a type of error-correcting code (\cite{Paterson, Hood, Casella, Byrnes, Zinovik,Chebiryak}).\footnote{For a survey of circuit codes, see \cite{Grunbaum} chapter 17.}

In this note we study circuit codes with long bit runs (sequences of distinct transitions between cyclically consecutive vertices).  We develop structural results and upper and lower bounds on the maximum length of such codes when they are symmetric (Lemmas \ref{XiUpperBd} - \ref{lem4k2l}).  These results lead to an exact formula for the maximum length of a symmetric $(d,k,r)$ circuit code when the spread $k$ is odd and the bit run length $r$ is maximum relative to $d$ and $k$ (Theorem \ref{thm4k2l}).  Furthermore, Theorem \ref{thm4k2l} results in an improved lower bound on the maximum length of a $(d,k)$ circuit code, without restriction on symmetry or bit run length, when $d$ and $k$ satisfy the same assumptions as in Theorem \ref{thm4k2l} (Corollary \ref{cor4k2l}).  We also prove a new characterization theorem for circuit codes with spread $k$ based upon a necessary condition of \cite{Deimer} (Theorem \ref{CharThm}).

\section{Transition Sequences}
\label{TransitionSequences}
Each vertex of $I(d)$ corresponds to a binary vector of length $d$, so for every circuit $C=(x_1,\ldots,x_N)$ of $I(d)$ we can define a \emph{transition sequence} $T=(\tau_1,\ldots,\tau_N)$ where $\tau_i$ denotes the position in which $x_i$ and $x_{i+1}$ (or $x_N$ and $x_1$) differ.  Using the convention that $x_1=\vec{0}$ for any circuit, we see that the transition sequence corresponds uniquely to the edges in $C$.  Since $I(d)$ is bipartite this implies $|T|$ is even \cite{Harary}.

Define a \emph{segment} of a sequence $T=(\tau_1,\ldots,\tau_N)$ as a subsequence of cyclically consecutive elements.  For any $x_i,x_j\in C=(x_1,\ldots,x_N)$ with $i<j$ there are exactly two segments in $T$ between $x_i$ and $x_j$, corresponding to the two paths in $C$ traversing the edges: $x_ix_{i+1},\ldots,x_{j-1}x_j$ and $x_jx_{j+1},\ldots,x_{N-1}x_N,x_Nx_1,\ldots,x_{i-1}x_i$.  These segments are 
$(\tau_i,\tau_{i+1},\ldots,\tau_{j-1})$ and $(\tau_j,\tau_{j+1},\ldots,\tau_N,\tau_1,\ldots,\tau_{i-1})$.  If $i=j$ then the two segments are $\varnothing$ and $T$.  These segments are called complements because they partition $T$.  If $\hat{T}$ is a segment in $T$, its complement is denoted $\hat{T}^\complement$, and $(\hat{T}^\complement)^\complement=\hat{T}$.

The set of \emph{transition elements} $\{t_1,\ldots,t_m\}$ $(m\le d)$ of $T$ are the unique elements of $T$.  When $T$ is the transition sequence of a circuit each $t_i\in \{ t_1,\ldots,t_m\}$ must appear in $T$ an even number of times.  Without loss of generality, we assume $m=d$, otherwise the code can be embedded in a lower dimension. 

\begin{defn}
For a segment $\hat{T}$ of $T$, let $\delta(\hat{T})$ denote the number of transition elements in $\hat{T}$ that appear with odd parity.
\end{defn}

Observe that if $\hat{T}$ corresponds to a path in $C$ between vertices $x,y\in C$, then $d_{I(d)}(x,y)=\delta(\hat{T})$.
For notational convenience (especially in Lemmas \ref{XiUpperBd}-\ref{SymmCodeUb2}) we sometimes assume that a certain segment $\hat{T}$ of $T$ constitutes its first $m$ transitions or that the first $m$ transitions of $T$ are $(1,\ldots,m)$ for some appropriate value of $m$.  This is done without loss of generality since we can cyclically shift the indices of vertices in $C$ (and thus of the transitions in $T$) and are free to label $\{t_1,\ldots,t_d\}$ according to any permutation of $\{1,\ldots,d\}$.

\section{Circuit Codes with Long Bit Runs}
Given a $(d,k)$ circuit code $C$ with transition sequence $T$, the \emph{maximum bit run}, $\phi(C)$, denotes the length of the longest segment of $T$ that does not repeat a transition element.  Similarly, the \emph{minimum bit run}, $\xi(C)$, denotes the maximum length an arbitrary segment of $T$ can have without repeating a transition.  It is easy to see that $\phi(C)\le d$, and if $N=|C|>2k$ then $\xi(C)\ge k+1$.  The special case where $k=1$ and $C$ is a Hamiltonian circuit has been extensively studied \cite{GoddynLawrence, Goddyn, Lukito}, and there exist such $C$ where $\xi(C)\ge d-3\log_2 d$ \cite{Goddyn}.  An upper bound on $\phi(C)$ (in terms of $d$) is given by the following theorem.

\begin{thm}[Singleton \cite{Singleton} Theorem 3]
\label{SingletonDimensionThm}
If $C$ is a $(d,k)$ circuit code with $N=|C|>2d$, then $\phi(C)\ge k+2$ implies $d\ge k+1+\lfloor \frac{\phi(C)}{2}\rfloor$.
\end{thm}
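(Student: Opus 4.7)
My plan is to proceed by contradiction: suppose $d \le k + \lfloor \phi(C)/2 \rfloor$. By cyclically shifting $C$ and relabeling transition elements (as noted in Section \ref{TransitionSequences}), I may assume the maximum bit run occupies positions $1, \ldots, \phi$ with $\tau_1 \tau_2 \cdots \tau_\phi = 1\, 2 \cdots \phi$, so that $x_1 = \vec{0}$ and $x_{\phi+1}$ has ones in coordinates $\{1,\ldots,\phi\}$ and zeros elsewhere. The contradiction hypothesis implies that at most $d - \phi \le k - \lceil \phi/2 \rceil$ transition elements fall outside $\{1,\ldots,\phi\}$.

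The first step is to show the $k+1$ transitions $\tau_{\phi+1}, \ldots, \tau_{\phi+k+1}$ are pairwise distinct. The direct $C$-distance between $x_{\phi+1}$ and $x_{\phi+k+2}$ is $k+1$, while the around-path has length $N - k - 1 > 2d - k - 1 \ge 2\phi - k - 1 \ge k + 3$, so $d_C(x_{\phi+1}, x_{\phi+k+2}) = k+1 \ge k$. Lemma \ref{KleeLemma2} then gives $d_{I(d)}(x_{\phi+1}, x_{\phi+k+2}) \ge k$, and this Hamming distance equals $\delta(\tau_{\phi+1}, \ldots, \tau_{\phi+k+1})$. Any repetition among these $k+1$ transitions would drop $\delta$ by at least $2$, violating the lower bound, so all $k+1$ of them are distinct.

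Let $\ell := |\{\tau_{\phi+1}, \ldots, \tau_{\phi+k+1}\} \cap \{1,\ldots,\phi\}|$. Since only $d - \phi$ transition elements lie outside $\{1,\ldots,\phi\}$, we get $\ell \ge (k+1) - (d - \phi) = \phi + k + 1 - d$. Counting odd-parity transitions in the prefix $\tau_1 \cdots \tau_{\phi+k+1}$ gives
$$ d_{I(d)}(x_1, x_{\phi+k+2}) = (\phi - \ell) + (k + 1 - \ell) = \phi + k + 1 - 2\ell \le 2d - \phi - k - 1. $$
Under $d \le k + \lfloor \phi/2 \rfloor$, the right-hand side is strictly less than $k$, so $d_{I(d)}(x_1, x_{\phi+k+2}) < k$. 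Definition \ref{def1} then forces $d_C(x_1, x_{\phi+k+2}) = d_{I(d)}(x_1, x_{\phi+k+2})$. Since the direct $C$-path length $\phi + k + 1$ exceeds $k$, the minimum must be realized by the around-path, yielding $N - \phi - k - 1 \le 2d - \phi - k - 1$, i.e.\ $N \le 2d$, contradicting $N > 2d$.

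The main technical point is orchestrating the two comparisons between direct and around $C$-paths in tandem with the hypothesis $N > 2d$: first to confirm that the direct path realizes $d_C$ in the distinctness step, and then (implicitly, via the contradiction) to force the around-path to be the shorter one in the final computation. Once these comparisons are in place, the odd-parity bookkeeping collapses cleanly to the single inequality $N \le 2d$.
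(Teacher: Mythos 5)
Your proof is correct. Note that the paper does not actually prove this statement --- it is quoted from Singleton --- so there is no in-paper proof to compare against; but your argument is essentially the standard Singleton-style double count, the same device the paper itself generalizes in Lemma \ref{XiUpperBd} (there phrased as $n_1\ge k$, $n_1+n_2\le d$, $n_1+2n_2=2k+l+1$, hence $2d\ge 3k+l+1$). All the supporting steps check out: the around-path length bound $N-k-1>2\phi-k-1\ge k+3$ legitimately forces $d_C(x_{\phi+1},x_{\phi+k+2})=k+1$, the parity argument ($\delta\le k-1$ under any repetition) gives distinctness of $\tau_{\phi+1},\ldots,\tau_{\phi+k+1}$, and the final chain $\delta(\tau_1,\ldots,\tau_{\phi+k+1})\le 2d-\phi-k-1<k$ combined with Definition \ref{def1} cleanly yields $N\le 2d$.
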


Let $\mathcal{F}(d,k,r)$ denote the set of spread $k$ circuit codes in dimension $d$ with $\phi(C)\ge r$.  Note that $\mathcal{F}(d,k,r+1)\subseteq \mathcal{F}(d,k,r)$ and $\mathcal{F}(d,k+1,r)\subseteq \mathcal{F}(d,k,r)$.  Let $L(d,k,r)$ denote the maximum length of an element of $\mathcal{F}(d,k,r)$, i.e. $K(d,k)$ for $C\in \mathcal{F}(d,k,r)$.  Douglas \cite{Douglas2} proved the following results.

\begin{prop}[Douglas \cite{Douglas2} Remark (6)]
\label{DouglasRem5}
For $k$ even and $l$ odd, with $k\ge 2l-2$, $L(\frac{3k}{2}+\frac{l+1}{2},k,k+l)\le 4k+3l-1$.
\end{prop}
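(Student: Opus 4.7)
The plan is to exploit the following observation: Singleton's Theorem~\ref{SingletonDimensionThm}, applied to $\phi(C)\ge k+l$, gives $d\ge k+1+\lfloor(k+l)/2\rfloor = \frac{3k+l+1}{2}$ (using $k$ even and $l$ odd, so $k+l$ is odd), which matches the ambient dimension of the proposition exactly. Hence every $C\in\mathcal{F}(\frac{3k+l+1}{2},k,k+l)$ is Singleton-tight, and this extremality should force a rigid structure on the transition sequence around each long bit run.

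Let $C$ have transition sequence $T=(\tau_1,\ldots,\tau_N)$. After a cyclic shift of indices and a relabeling of transition elements (as permitted by the conventions of Section~\ref{TransitionSequences}), I would assume the first $k+l$ entries of $T$ form a bit run $\hat{T}=(1,2,\ldots,k+l)$. Since each transition element appears in $T$ an even number of times, the complement $\hat{T}^\complement=(\tau_{k+l+1},\ldots,\tau_N)$ uses each element of $\{1,\ldots,k+l\}$ an odd number of times and each element of $\{k+l+1,\ldots,d\}$ an even number of times, so $\delta(\hat{T}^\complement)=k+l$. The claimed bound is equivalent to showing $|\hat{T}^\complement|\le 3k+2l-1$.

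Next I would perform a local analysis of $\hat{T}^\complement$ near both of its endpoints. Combining Klee's Lemma~\ref{KleeLemma2} with Singleton-tightness (applied to any putative second bit run of length $\ge k+l$ lying inside $\hat{T}^\complement$), the first few transitions of $\hat{T}^\complement$ should be forced to lie inside $\{1,\ldots,k+l\}$ and confined to a narrow band of indices near the endpoints of that interval, with the symmetric conclusion holding near $\tau_N$. Iterating this analysis, and using that only $(k-l+1)/2$ transition elements lie outside the initial bit run, $\hat{T}^\complement$ should decompose as a bounded concatenation of short controlled blocks.

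The main obstacle will be propagating this rigidity cleanly through the interior of $\hat{T}^\complement$ and summing the block lengths to exactly $3k+2l-1$. I expect the hypothesis $k\ge 2l-2$ to enter precisely where the two endpoint analyses meet in the middle: it should guarantee enough room for the $(k-l+1)/2$ ``new'' transition elements to appear with even parity without colliding with the constrained boundary regions, while simultaneously ruling out any interior bit run of length $\ge k+l$ that would inflate the count. Once these structural restrictions are established, summing the block lengths should give $N\le 4k+3l-1$.
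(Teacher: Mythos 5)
The paper does not prove Proposition~\ref{DouglasRem5}; it is quoted from Douglas \cite{Douglas2}, Remark (6), so there is no internal proof to compare against. Judged on its own terms, your proposal is a plan rather than a proof, and the plan has a genuine gap at its center. The setup is fine: Singleton's Theorem~\ref{SingletonDimensionThm} does show that $d=\frac{3k+l+1}{2}$ is the minimal dimension admitting $\phi(C)\ge k+l$ (since $k+l$ is odd here), the reduction of the claim to $|\hat{T}^\complement|\le 3k+2l-1$ is correct, and the parity computation $\delta(\hat{T}^\complement)=k+l$ is sound. The tightness phenomenon you allude to is also real and is essentially what Lemma~\ref{XiUpperBd} of this paper extracts: writing the $k+1$ transitions following the bit run as $\omega_2$, the counts $n_1\ge k$, $n_1+n_2\le d$, $n_1+2n_2=2k+l+1$ force $n_1=k$ and $n_2=\frac{k+l+1}{2}$, so all $d$ transition elements already occur in the first $2k+l+1$ positions and $\xi(C)=k+1$.

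Everything after that, however --- that the transitions of $\hat{T}^\complement$ near its endpoints are ``confined to a narrow band,'' that the interior ``should decompose as a bounded concatenation of short controlled blocks,'' and that ``summing the block lengths should give $N\le 4k+3l-1$'' --- is asserted in the conditional mood and never carried out. That interior analysis is the entire content of the bound: the local rigidity adjacent to one bit run only controls on the order of $2k+l$ positions, and you have not exhibited any mechanism preventing $\hat{T}^\complement$ from being longer than $3k+2l-1$ while still satisfying the spread condition on every short segment. You also never locate where the hypothesis $k\ge 2l-2$ is actually used; you only ``expect'' it to enter where the two endpoint analyses meet, which is not a proof that it does. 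As written, the proposal identifies a plausible starting point (and one consistent with the techniques of Section 3 of this paper) but does not contain a proof of the stated inequality; to be complete it would need the explicit block decomposition of $\hat{T}^\complement$, a verified length bound for each block, and a worked contradiction for the case $N\ge 4k+3l$.
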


\begin{prop}[Douglas \cite{Douglas2} Remark (7)]
\label{DouglasRem6}
For $k$ odd and $l$ even, with $k\ge 2l+1$, $L(\frac{3k}{2}+\frac{l+1}{2},k,k+l)\le 4k+3l+2$.
\end{prop}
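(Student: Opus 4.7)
The plan is to use Singleton's dimension bound (Theorem~\ref{SingletonDimensionThm}) to pin down the maximum bit run length exactly, and then combine the spread-$k$ distinctness condition (via Lemma~\ref{KleeLemma2}) with parity constraints on the transition sequence to bound the length of the complement of a maximum bit run.

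First, since $k$ is odd and $l$ is even (hence $l \ge 2$), $k+l$ is odd and $k+l+1 \ge k+2$, so Singleton applies to any bit run of length at least $k+l+1$. If one had $\phi(C) \ge k+l+1$, Theorem~\ref{SingletonDimensionThm} would force $d \ge k+1+\lfloor(k+l+1)/2\rfloor = (3k+l+3)/2$, exceeding the hypothesized $d=(3k+l+1)/2$. Hence $\phi(C)=k+l$ exactly, and WLOG $\tau_i=i$ for $1\le i\le k+l$. Writing $M=\{k+l+1,\ldots,d\}$ for the remaining $(k-l+1)/2$ transition elements, the complement $\hat T^\complement=(\tau_{k+l+1},\ldots,\tau_N)$ has length $N-(k+l)$, which I aim to bound by $3k+2l+2$.

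The two key structural ingredients are: (i) by Lemma~\ref{KleeLemma2}, any $k$ cyclically consecutive transitions are pairwise distinct, since the endpoints of such a segment are at $C$-distance $k$ and hence at $I(d)$-distance $\ge k$, which forces $\delta=k$; and (ii) by the bipartite parity observation of Section~\ref{TransitionSequences}, each element of $\{1,\ldots,k+l\}$ appears an odd (hence at least one) number of times in $\hat T^\complement$, while each element of $M$ appears an even (possibly zero) number of times there. I would then perform a boundary analysis on the transitions flanking $\hat T$. For example, the $k$-window $(\tau_{l+2},\ldots,\tau_{k+l+1})$ has first $k-1$ entries equal to $l+2,\ldots,k+l$, forcing $\tau_{k+l+1}\in\{1,\ldots,l+1\}\cup M$; moreover, by maximality of $\phi(C)$, the $k+l+1$ entries $(\tau_{l+2},\ldots,\tau_{2l+k+2})$ cannot all be distinct, producing additional constraints. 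Iterating this argument forward from $\tau_{k+l+1}$ and symmetrically backward from $\tau_N$ would pin down the transitions immediately adjacent to $\hat T$ up to a small number of cases. Combining these local descriptions with the global parity requirements (each used element must recur, $M$-elements only in pairs) should then yield $N\le 4k+3l+2$.

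The main obstacle will be the boundary case analysis: enumerating the choices for transitions just after and before $\hat T$ under both the spread-$k$ distinctness and the maximality of $\phi(C)$, and then controlling the number of further transitions needed to close the circuit while satisfying parity. The hypothesis $k\ge 2l+1$ should enter precisely here, keeping $|M|=(k-l+1)/2$ small enough to rule out alternative closures that would make $\hat T^\complement$ too long; it also bounds how many of the used transitions can be reused in parallel near the boundary without violating the spread.
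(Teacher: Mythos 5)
First, a point of order: the paper does not prove this statement at all --- it is imported verbatim from Douglas \cite{Douglas2} (Remark (7)) and used as a black box, so there is no in-paper proof to compare against. Judged on its own terms, your proposal has a genuine gap. The setup is sound: the application of Theorem \ref{SingletonDimensionThm} to force $\phi(C)=k+l$ is correct (modulo noting that if $N\le 2d=3k+l+1$ the bound is trivial, so one may assume $N>2d$), and your ingredients (i) and (ii) are both valid --- indeed (i) can be strengthened to ``any $k+1$ cyclically consecutive transitions are distinct,'' which is exactly the fact $\xi(C)\ge k+1$ that the paper uses, and your local analysis of the transitions flanking the bit run parallels what the paper's Lemma \ref{XiUpperBd} establishes for its own purposes. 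The problem is that everything you have written down either constrains individual transitions near the boundary of $\hat T$ or gives \emph{lower} bounds on the length of $\hat T^{\complement}$ (each of $1,\ldots,k+l$ must recur at least once; elements of $M$ recur in pairs). None of it supplies a mechanism that caps $N$ from above. Nothing you state rules out, say, each of $1,\ldots,k+l$ appearing three times and each element of $M$ appearing four times in $\hat T^{\complement}$, which would make $N$ far larger than $4k+3l+2$; the spacing constraint (same element at least $k+1$ apart) is compatible with arbitrarily long sequences. The entire content of the bound must come from the interaction between the small dimension $d=\frac{3k+l+1}{2}$ and the spread condition applied to windows deep inside $\hat T^{\complement}$, and that is precisely the part you defer to ``iterating this argument \ldots should then yield $N\le 4k+3l+2$.''

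Concretely, what is missing is a quantitative step of the following flavor: show that every window of $2k+\text{const}$ consecutive transitions must contain all (or all but a bounded number) of the $d$ transition elements, with at most $\frac{k+l+1}{2}$ of them doubled --- this is the $(\omega_1,\omega_2)$ counting in the proof of Lemma \ref{XiUpperBd} --- and then propagate that window along $\hat T^{\complement}$ to show the sequence is forced to exhaust its legal moves and close up within $3k+2l+2$ further steps. Until you exhibit that propagation argument and verify that it terminates at exactly the claimed constant (where, as you correctly suspect, the hypothesis $k\ge 2l+1$ must be invoked), the proposal is a plausible plan rather than a proof.
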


In addition \cite{Douglas2} established exact values for $K(d,k)$ for the following cases (which can be interpreted as tight upper bounds for the length of $C\in \mathcal{F}(d,k,k+1)$).  Notice that the upper bounds of Propositions \ref{DouglasRem5} and \ref{DouglasRem6} are not tight for the cases below.

\begin{thm}
\label{DouglasLengthThm}
\begin{enumerate}
\setlength{\itemsep}{0pt}
\setlength{\parskip}{0pt}
\item []
\item[(i)] $K(\frac{3k}{2} +2,k) = 4k+6$ for $k$ even.  (\cite{Douglas2} Theorem 3).
\item[(ii)] $K(\lfloor \frac{3k}{2} \rfloor +2,k) = 4k+4$ for $k$ odd.  (\cite{Douglas2} Theorem 4).
\item[(iii)] $K(\lfloor \frac{3k}{2} \rfloor +3,k) = 4k+8$ for $k$ odd and $\ge 9$.  (\cite{Douglas2} Theorem 5).
\end{enumerate}
\end{thm}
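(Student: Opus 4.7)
The theorem collects three exact-length results of Douglas. For each part I would follow the standard two-step paradigm: (a) construct an explicit $(d,k)$ circuit code achieving the claimed length, and (b) prove a matching upper bound on $K(d,k)$ for the stated dimension. For (a), the transition-sequence formalism of Section \ref{TransitionSequences} is the right language: describe the circuit as a cyclic transition sequence $T$ and verify spread $k$ by checking, via Lemma \ref{KleeLemma2}, that every segment $\hat T$ with $|\hat T|\ge k$ satisfies $\delta(\hat T)\ge k$. For the small codes involved in (i)--(iii) these checks are finite.

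The crux lies in the upper bounds. The first step is Singleton's theorem (Theorem \ref{SingletonDimensionThm}), which for each specified $d$ forces $\phi(C)$ into a narrow range (in each case, $\phi(C)$ is constrained to a handful of admissible values just above $k$). Without loss of generality, assume the longest bit run occupies the first $\phi(C)$ positions of $T$ and is labeled $(1,2,\ldots,\phi(C))$, as justified in Section \ref{TransitionSequences}. One then splits the analysis by the value of $\phi(C)$: when $\phi(C)$ attains its upper limit, Propositions \ref{DouglasRem5} and \ref{DouglasRem6} already give an upper bound close to the target (though not tight, as the paper notes), while for smaller $\phi(C)$ one must argue directly. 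The mechanism is to track which transition elements appear with odd parity in each candidate prefix of $T$ and to use the spread-$k$ constraint $\delta(\hat T)\ge k$ on all segments of length in $[k, N-k]$ to rule out long extensions. One must also handle the complementary segment $\hat T^{\complement}$, since both paths between the extreme vertices of any run must respect the distance condition.

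The main obstacle, I expect, is the case-by-case tightening of the Singleton/Douglas bounds to the exact value. The residual slack in Propositions \ref{DouglasRem5} and \ref{DouglasRem6} must be absorbed by exploiting the parity structure imposed by the small ambient dimension $d$, which forces transition elements outside the initial bit run to reuse a limited pool of labels and thus creates additional parity conflicts. Part (iii), with its restriction $k\ge 9$, appears the most delicate; that hypothesis suggests small $k$ yields sporadic codes violating the pattern, so the proof likely subdivides based on which elements of $\{1,\ldots,\phi(C)\}$ reappear immediately after the initial run and in what order, with separate arguments to exclude each incompatible continuation.
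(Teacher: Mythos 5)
This statement is not proved in the paper at all: Theorem~\ref{DouglasLengthThm} is imported verbatim from Douglas (\cite{Douglas2}, Theorems~3--5) and used as a black box, so there is no in-paper argument to compare yours against. That puts the entire burden on your proposal to be self-contained, and as written it is a strategy outline rather than a proof. Concretely, two things are missing. First, for the lower bounds you never exhibit the transition sequences achieving lengths $4k+6$, $4k+4$, and $4k+8$; saying that ``the checks are finite'' does not discharge the obligation to produce a code whose spread can be verified, and finding such codes (especially for part (iii), where the paper's own Example~\ref{Ex1} and Remark~\ref{rem1} show the extremal codes are not even unique up to isomorphism) is a nontrivial construction. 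Second, for the upper bounds you correctly identify that Singleton's theorem pins $\phi(C)$ to a short list of values (e.g.\ for $d=\frac{3k}{2}+2$, $k$ even, one gets $\phi(C)\le k+3$) and that Propositions~\ref{DouglasRem5} and~\ref{DouglasRem6} leave residual slack (e.g.\ $4k+8$ versus the target $4k+6$ in case (i)), but the mechanism you offer for absorbing that slack --- ``track parities and exploit the small ambient dimension'' --- is exactly the hard part, and you give no argument for why the extra length is impossible, only a description of where one would look. The case $\phi(C)=k+1$, where neither proposition applies, is not addressed at all.

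A further caution: you propose to use Propositions~\ref{DouglasRem5} and~\ref{DouglasRem6} as inputs to the upper-bound step. Those are themselves results of \cite{Douglas2} (Remarks~(6) and~(7)), and without consulting that paper you cannot be sure they are logically prior to Theorems~3--5 there rather than consequences of the same machinery; at minimum this dependency would need to be checked to avoid circularity. In short, your plan identifies the right ingredients (constructions, Lemma~\ref{KleeLemma2}, Theorem~\ref{SingletonDimensionThm}, a case split on $\phi(C)$, parity counting via $\delta$), but none of the load-bearing steps is actually carried out, so the proposal does not yet constitute a proof of any of the three parts.
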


Formulas for the exact value of the maximum length of a circuit code are extremely rare, in fact the only non-trivial formulas known are those in Theorem \ref{DouglasLengthThm}.  The main result of this section (Theorem \ref{thm4k2l}) is a new formula for the maximum length of symmetric $C\in \mathcal{F}(\frac{3k}{2}+\frac{l+1}{2},k,k+l)$ when $k$ is odd and $l$ is even $\ge 2$ with $k\ge 2l+1$, and a new lower bound on $K(\frac{3k}{2}+\frac{l+1}{2},k)$ that improves upon the best known lower bound when $k$ and $l$ satisfy these conditions (Corollary \ref{cor4k2l}).

We begin with a technical lemma showing that codes
 with the longest bit run possible in their dimension (per Theorem \ref{SingletonDimensionThm}) have $\xi(C)$ minimum.  Our argument is a generalization of an approach used by \cite{Singleton}.

\begin{lem}
\label{XiUpperBd}
Let $C$ be a $(\frac{3k}{2}+\frac{l+1}{2},k,k+l)$ circuit code where:  $k$ is even and $l$ odd, or $k$ odd and $l$ even, and having length $N>2d=3k+(l+1)$.  Then $\xi(C)=k+1$.  Furthermore, any segment $\hat{T}=(\tau_{r+1},\ldots,\tau_{r+2k+l+1})$ of the transition sequence $T$ of $C$ where $\tau_{r+1},\ldots,\tau_{r+k+l}$ are all distinct has $\delta(\hat{T})=k$ and all $\frac{3k}{2}+\frac{l+1}{2}$ transition elements appear in $\hat{T}$.
\end{lem}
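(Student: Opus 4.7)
The plan is to first pin down $\phi(C) = k+l$ exactly, then do a parity count on $\hat T$ to establish the "furthermore" portion, and finally argue $\xi(C) = k+1$ by contradiction. Applying Singleton's Theorem \ref{SingletonDimensionThm} with $\phi(C) = k+l+1$ (which is even since $k+l$ is odd under the parity hypothesis) would demand $d \geq k+1+(k+l+1)/2 > (3k+l+1)/2 = d$, a contradiction. So $\phi(C) \leq k+l$, and with the hypothesis $\phi(C) \geq k+l$ we get $\phi(C) = k+l$.

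For the furthermore portion, I would without loss of generality take $(\tau_{r+1},\ldots,\tau_{r+k+l}) = (1,\ldots,k+l)$. The next transition $\tau_{r+k+l+1}$ cannot be a new element (else the run reaches length $k+l+1$) and by Klee's Lemma \ref{KleeLemma2} it must differ from $\tau_{r+l+1},\ldots,\tau_{r+k+l}$, so $\tau_{r+k+l+1}\in\{1,\ldots,l\}$. The tail $(\tau_{r+k+l+1},\ldots,\tau_{r+2k+l+1})$ has $k+1$ pairwise-distinct transitions; let $m$ be the number of transitions in this tail that are "new" (outside $\{1,\ldots,k+l\}$), so $m \leq d-(k+l) = (k-l+1)/2$. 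A direct parity count gives
\[
\delta(\hat T) = (l-1)+2m,
\]
since among the $k+l$ run elements $k+1-m$ repeat in the tail (even parity) while $l-1+m$ occur only once (odd parity), and each of the $m$ new elements occurs once (odd parity). Because $N > 2d$ makes both paths between $x_{r+1}$ and $x_{r+2k+l+2}$ in $C$ have length $\geq k$, Klee's Lemma yields $\delta(\hat T) \geq k$, so $m \geq (k-l+1)/2$ and hence $m = (k-l+1)/2$, $\delta(\hat T) = k$, and every one of the $d$ transition elements appears in $\hat T$.

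For $\xi(C) = k+1$, only the upper bound requires argument, since $\xi(C) \geq k+1$ already follows from Klee's Lemma and $N > 2k$. I would assume $\xi(C) \geq k+2$ for contradiction. Under this sharpened assumption $\tau_{r+k+l+1} \in \{1,\ldots,l-1\}$ and symmetrically $\tau_r \in \{k+2,\ldots,k+l\}$; the $l=1$ case is then immediate, since the first set is empty. For $l \geq 2$ the plan is to exploit the fact that when $\tau_{r+k+l+1}=1$ the shifted sequence $(2,3,\ldots,k+l,1)$ at positions $r+2,\ldots,r+k+l+1$ is itself a maximal run of length $k+l$, so the "furthermore" clause applies to it as well; comparing the positions of the new transition elements in the original tail and in this shifted tail pins down $\tau_{r+2k+l+2}$, and iterating this rotation around the cycle should eventually force either a run of length greater than $k+l$ (contradicting the first step) or a pair of equal transitions at cyclic distance exactly $k+1$ (contradicting $\xi(C) \geq k+2$). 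The main technical obstacle is handling the intermediate case $\tau_{r+k+l+1} \in \{2,\ldots,l-1\}$ uniformly and closing the rotation argument; I expect one needs to choose an auxiliary maximal run elsewhere in the code and leverage the rigid structure that Steps 1–3 have already forced on every such run.
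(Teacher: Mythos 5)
Your parity count for the ``furthermore'' clause is correct and is essentially the same argument as the paper's (the paper counts $n_1$ elements appearing once and $n_2$ appearing twice in the $(2k+l+1)$-segment and squeezes $n_1=k$, $n_1+n_2=d$ from $n_1\ge k$, $n_1+n_2\le d$, $n_1+2n_2=2k+l+1$; your $m$ plays the role of $d-(k+l)-$(missing new elements), and the detour through $\phi(C)=k+l$ and $\tau_{r+k+l+1}\in\{1,\ldots,l\}$ is not actually needed for that count). The problem is the last claim, $\xi(C)\le k+1$: your proposed contradiction argument from $\xi(C)\ge k+2$ via rotating maximal runs is not completed --- you explicitly leave open how to handle $\tau_{r+k+l+1}\in\{2,\ldots,l-1\}$ and how to close the rotation --- so as written this is a genuine gap, and it is the harder of the two routes.

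The missing idea is much simpler and reuses the machinery you already built: extend the segment by one more transition. Let $\alpha=\tau_{r+2k+l+2}$ and consider $(\hat T,\alpha)$, of length $2k+l+2$. Since $N>2d$ and $N$ is even, $N\ge 2d+2$, so the complement of $(\hat T,\alpha)$ still has length $\ge k$ and the spread condition forces $\delta(\hat T,\alpha)\ge k$; but $\delta(\hat T,\alpha)=\delta(\hat T)\pm 1=k\pm 1$, so it equals $k+1$ and hence $\alpha$ occurs an even number of times in $\hat T$. You have already shown every transition element occurs in $\hat T$, and each occurs at most twice (once in the run, once in the tail), so $\alpha$ occurs exactly twice, in particular once in the tail $(\tau_{r+k+l+1},\ldots,\tau_{r+2k+l+1})$. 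That tail together with $\alpha$ is a segment of length $k+2$ containing a repeated transition, so $\xi(C)\le k+1$, and with $\xi(C)\ge k+1$ (from $N>2k$) you are done. I recommend replacing your Step 3 entirely with this one-step extension.
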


\begin{proof}
We assume that the transition sequence $T$ of $C$ begins with the segment:
\[
\hat{T} = \underbrace{\tau_1,\ldots,\tau_{k+l}}_{\omega_1},\underbrace{\tau_{k+l+1},\ldots,\tau_{2k+l+1}}_{\omega_2},\alpha
\]
where $\omega_1$ consists of $k+l$ distinct transitions and all the transitions in $\omega_2$ are distinct.  Note that since $C\in \mathcal{F}(\frac{3k}{2}+\frac{l+1}{2},k,k+l)$ its transition sequence $T$ contains a segment $\tilde{T}$ of $k+l$ distinct transitions, and the next $k+1$ transitions must all be distinct since $\xi(C)\ge k+1$.  Without loss of generality these $2k+l+1$ transitions are the first $2k+l+1$ transitions of $T$.

Let $n_1$ denote the number of transitions appearing once in $(\omega_1,\omega_2)$ and let $n_2$ denote the number of transitions appearing twice.  Then $n_1\ge k$ (as $k<|(\omega_1,\omega_2)|<N-k$, so $\delta((\omega_1,\omega_2))\ge k$), $n_1+n_2\le d$, and $n_1+2n_2=2k+l+1$.  Thus $2d\ge 3k+l+1$ with equality implying $n_1=k$.  Since $d=\frac{1}{2}(3k+(l+1))$ we see that $\delta((\omega_1,\omega_2))=n_1=k$ and $n_2=\frac{1}{2}(k+(l+1))=d-n_1$.  Hence all transitions occur in $(\omega_1,\omega_2)$ and thus in $\hat{T}$.

Now $\delta((\omega_1,\omega_2))=n_1=k$, so $\delta(\hat{T})=\delta((\omega_1,\omega_2,\alpha))=\delta((\omega_1,\omega_2))\pm 1$ must equal $k+1$ (as $N>2d)$.  Thus $\alpha$ occurs twice in $(\omega_1,\omega_2)$, so $\alpha \in \omega_2$.  Hence $(\omega_2,\alpha)$ is a segment of $T$ of size $k+2$ with a repeated transition, so $\xi(C)=k+1$.
\end{proof}

Recall that a \emph{symmetric} code is one whose transition sequence $T=(\tau_1,\ldots,\tau_N)$ satisfies $\tau_i=\tau_{i+N/2}$ for $1\le i\le N/2$.  
In the next two results we use the $(\omega_1,\omega_2)$ structure from Lemma \ref{XiUpperBd} and related observations to derive new upper bounds on the length of symmetric $C\in \mathcal{F}(\frac{3k}{2}+\frac{l+1}{2},k,k+l)$, substantially improving upon the general case upper bounds of Propositions \ref{DouglasRem5} and \ref{DouglasRem6}.

\begin{lem}
\label{SymmCodeUb1}
Let $k$ be even and $l$ be odd $\ge 3$ with $k\ge 2l-2$, or $k$ is odd and $l$ even $\ge 2$ with $k\ge 2l+1$.  Let $C\in \mathcal{F}(\frac{3k}{2}+\frac{l+1}{2},k,k+l)$ be symmetric, then $|C|\le 4k+2(l+1)$.
\end{lem}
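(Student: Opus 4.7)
The plan is to argue by contradiction. Suppose $|C| > 4k + 2(l+1)$; since $|C|$ is even, $n := |C|/2 \ge 2k+l+2$, so the first-half transition sequence $S = (\tau_1,\ldots,\tau_n)$ extends beyond the $(\omega_1,\omega_2)$ block of Lemma \ref{XiUpperBd} by a nonempty ``middle'' $(\mu_1,\ldots,\mu_m)$ with $m \ge 1$. The goal is to rule out $m \ge 1$.

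First I would invoke Lemma \ref{XiUpperBd}: WLOG $T$ begins $(\omega_1,\omega_2,\ldots)$ with $\omega_1 = (\tau_1,\ldots,\tau_{k+l})$ and $\omega_2 = (\beta_1,\ldots,\beta_{k+1}) = (\tau_{k+l+1},\ldots,\tau_{2k+l+1})$; the block $(\omega_1,\omega_2)$ contains every transition element, with exactly $k$ singletons and $d-k$ doubletons, $\delta((\omega_1,\omega_2))=k$, and $\xi(C)=k+1$. Applying $\xi(C)=k+1$ to the length-$(k+1)$ segment $(\beta_2,\ldots,\beta_{k+1},\tau_{2k+l+2})$, which must have no repeats, forces $\mu_1 = \tau_{2k+l+2} = \beta_1$. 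The spread-$k$ condition applied to the pair $(x_1,x_{2k+l+3})$ (whose $C$-distance is $2k+l+2 \ge k$) then forces $\delta((\omega_1,\omega_2,\mu_1)) \ge k$; since this equals $k \pm 1$ and equals $k+1$ exactly when appending $\mu_1=\beta_1$ toggles $\beta_1$ from even parity, $\beta_1$ must be a doubleton, so $\beta_1 = \tau_p$ for some $p \in \{1,\ldots,k+l\}$.

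Next I would derive two conflicting bounds on $p$. The $\xi(C)=k+1$ constraint applied to a length-$(k+1)$ segment containing both occurrences of $\beta_1$ in $(\omega_1,\omega_2)$ (at positions $p$ and $k+l+1$) forces these positions to be at least $k+1$ apart, yielding $p \le l$. Going the other way, symmetry gives $\tau_{n+i}=\tau_i$, so the length-$(k+1)$ segment $(\mu_1,\mu_2,\ldots,\mu_m,\tau_1,\ldots,\tau_{k-m+1})$ spanning the end of the middle and the start of the second copy of $\omega_1$ must be distinct; in particular $\beta_1 = \mu_1 \neq \tau_1,\ldots,\tau_{k-m+1}$, hence $p \ge k-m+2$ (valid whenever $m \le k$). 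Combining gives $m \ge k-l+2$, which already yields a contradiction for $1 \le m \le k-l+1$, using $k \ge 2l+1$ (odd case) or $k \ge 2l-2$ (even case) to ensure $k-l+2 \ge 2$.

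The remaining range $m \ge k-l+2$ is the main obstacle. Here I would iteratively apply $\xi(C)=k+1$ to overlapping length-$(k+1)$ windows across the $\omega_2$/middle boundary to show each $\mu_j$ lies in an explicit small set (essentially $\mu_j \in \{\beta_j\} \cup \{\text{singletons of }\omega_1\}$). Setting $v := \bigoplus_{i=1}^{n} e_{\tau_i}$ and letting $V_0$ denote the $k$ singletons of $(\omega_1,\omega_2)$, one has $w(v) = |V_0 \triangle \{\mu_1,\ldots,\mu_m\}|$ (since the $\mu_j$ are pairwise distinct for $m \le k+1$ by $\xi$). The spread-$k$ condition applied to the pair $(x_1,x_{n+1})$ requires $w(v) \ge k$, and more stringently for the pairs $(x_1,x_{n+1+r})$ with $1 \le r \le n-k$ requires $w(v \oplus e_{\tau_1} \oplus \cdots \oplus e_{\tau_r}) \ge k$. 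I expect this parity count, combined with the explicit restrictions on the $\mu_j$, to rule out every admissible middle configuration, completing the contradiction. The main difficulty lies precisely in this case analysis: identifying all middles admissible under the $\xi$-constraints and showing each violates some $w(v\oplus e_{\tau_1}\oplus\cdots) \ge k$ inequality.
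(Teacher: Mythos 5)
Your proposal correctly reproduces the opening of the paper's argument: after invoking Lemma \ref{XiUpperBd} you identify that the transition $\mu_1=\tau_{2k+l+2}$ following $(\omega_1,\omega_2)$ must be a doubleton equal to $\beta_1=\tau_{k+l+1}$, whose other occurrence sits at some position $p\le l$ in $\omega_1$; this is exactly the paper's deduction that $\tau_{k+l+1}\in\{\tau_1,\ldots,\tau_l\}$. Your use of symmetry to force $p\ge k-m+2$ and hence $m\ge k-l+2$ is also sound. But the proof then has a genuine gap: the entire range $m\ge k-l+2$ is left unresolved. You describe a plan (windowed $\xi$-constraints plus a weight inequality $w(v\oplus e_{\tau_1}\oplus\cdots)\ge k$) and explicitly say you ``expect'' it to rule out all admissible middles; nothing is actually proved there, and that case is where all the content of the lemma lives. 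As written, the argument does not establish the bound.

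The missing ingredient is the one the paper leans on: the \emph{a priori} upper bounds of Propositions \ref{DouglasRem5} and \ref{DouglasRem6}, which hold without any symmetry assumption and give $N\le 4k+3l-1$ (resp.\ $4k+3l+2$). These cap the half-length at $n\le 2k+\frac{3l+1}{2}$ (resp.\ $2k+\frac{3l}{2}+1$), i.e.\ $m\le \frac{l}{2}$ roughly, which is flatly incompatible with your lower bound $m\ge k-l+2\ge l+3$ under $k\ge 2l+1$ (and similarly in the even case under $k\ge 2l-2$). The paper packages this as a one-line contradiction: by symmetry the segment $\beta=(\alpha,\ldots,\tau_1,\ldots,\tau_l)$ running from position $2k+l+2$ to position $N/2+l$ has length $N/2-2k-1\le\frac{3(l-1)}{2}$ or $\frac{3l}{2}$, which is $<k$, so by spread $k$ all its transitions must be distinct --- yet it contains $\alpha\in\{\tau_1,\ldots,\tau_l\}$ twice. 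If you import Douglas's bounds, your argument closes immediately and you can discard the speculative case analysis; without them, the proposal is incomplete.
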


\begin{proof}
Assume not, then without loss of generality the transition sequence $T$ of $C$ contains a segment
\[
\hat{T}=\underbrace{\tau_1,\ldots,\tau_{k+l}}_{\omega_1},\underbrace{\tau_{k+l+1},\ldots,\tau_{2k+l+1}}_{\omega_2},\alpha
\]
where all $k+l$ transitions in $\omega_1$ are distinct and all of the $k+1$ transitions in $\omega_2$ are distinct.  We know (by Lemma \ref{XiUpperBd}) that $\delta((\omega_1,\omega_2))=k$, thus $\alpha$ must appear an even number of times in $(\omega_1,\omega_2)$.  Since all transitions appear in $(\omega_1,\omega_2)$ it follows that $\alpha \in \omega_2$, so $\alpha=\tau_{k+l+1}$, and $\alpha \in \omega_1$, so $\tau_{k+l+1}\in \{\tau_1,\ldots,\tau_l\}$ (as $\xi(C)=k+1$).

Now consider the segment $\beta=(\alpha,\ldots,\tau_1,\ldots,\tau_l)$ of $T$.  If $k$ is even then $|\beta|\le \frac{l}{2}-\frac{3}{2}+l=\frac{3(l-1)}{2}$ by Proposition \ref{DouglasRem5}.  If $k$ is odd, then $|\beta|\le \frac{3l}{2}$ by Proposition \ref{DouglasRem6}.  In the even case, $\frac{3(l-1)}{2}<k$ since $k\ge 2l-2$, in the odd case $\frac{3l}{2}<k$ since $k\ge 2l+1$.  Hence by Definition \ref{def1} we require $\delta(\beta)=|\beta|$ in both cases, i.e. all transitions in $\beta$ must be distinct.  But since $\alpha \in \{\tau_1,\ldots,\tau_{l}\}$ this is impossible and we reach a contradiction.
\end{proof}

\begin{lem}
\label{SymmCodeUb2}
Let $k$ be odd and $l$ even $\ge 2$ with $k\ge 2l+1$.  Then for $C\in \mathcal{F}(\frac{3k}{2}+\frac{l+1}{2},k,k+l)$ and symmetric, $|C|\le 4k+2l$.
\end{lem}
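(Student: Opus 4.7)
Since Lemma~\ref{SymmCodeUb1} already gives $|C|\le 4k+2l+2$ and $|C|$ must be even, my plan is to assume for contradiction that $|C|=4k+2l+2$ and deduce that $T$ would then be so periodic that it could not involve all $d$ transition elements.

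After cyclic shifting and relabelling, by Lemma~\ref{XiUpperBd} I may assume $T$ begins with the segment $(\omega_1,\omega_2,\alpha)$ of Lemma~\ref{SymmCodeUb1}. The purely local portion of that earlier proof---the combination of $\delta(\omega_1,\omega_2)=k$ with $\xi(C)=k+1$, used before any appeal to Propositions~\ref{DouglasRem5} or~\ref{DouglasRem6}---forces $\alpha=\tau_{k+l+1}$. Since $N/2=2k+l+1$, symmetry rewrites $\alpha=\tau_{2k+l+2}$ as $\tau_1$, giving the base identity $\tau_{k+l+1}=\tau_1$.

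The main step is an induction showing $\tau_{s+k+l}=\tau_s$ for every $s\ge 1$. If the identity holds for $s=1,\dots,r$, then $(\tau_{r+1},\dots,\tau_{r+k+l})$ is a cyclic rotation of $\omega_1$ and therefore consists of $k+l$ distinct transitions. Lemma~\ref{XiUpperBd} then applies to the length-$(2k+l+1)$ window starting at position $r+1$, and rerunning the same local argument yields $\tau_{r+2k+l+2}=\tau_{r+k+l+1}$; symmetry replaces the left side by $\tau_{r+1}$, closing the induction.

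Hence $T$ is cyclically $(k+l)$-periodic, and combined with its $(2k+l+1)$-periodicity from symmetry its period divides $\gcd(k+l,2k+l+1)=\gcd(l-1,k+1)\le l-1$; writing $N=4(k+1)+2(l-1)$ shows this gcd also divides $N$, so the period genuinely partitions $T$. Thus $T$ contains at most $l-1$ distinct transition elements, whereas $k\ge 2l+1$ forces $d=\frac{3k+l+1}{2}\ge\frac{7l+4}{2}>l-1$, contradicting the fact that all $d$ transition elements occur in $T$. I expect the inductive step to be the main obstacle: at each shift I must verify that the hypotheses of Lemma~\ref{XiUpperBd} persist and that the derivation of $\alpha$'s position carries through unchanged, both of which reduce to distinctness of the shifted $k+l$-window, which is exactly what the inductive hypothesis supplies.
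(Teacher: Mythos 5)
Your proof is correct, but it takes a genuinely different route from the paper's. The paper also assumes $|C|=4k+2(l+1)$ and works inside one period $(\omega_1,\omega_2,\omega_3,\alpha_{k+1})$, but it then counts the overlaps $s_1,s_2$ of $\omega_3$ with $\omega_1$ and $\omega_2$, derives the constraints (3)--(6) that pin down $s_1=l/2$ and $s_2=(k-1)/2$, and finishes with a parity contradiction on the transition element $1$ (it would have to be absent from $\omega_3\cup\{\alpha_{k+1}\}$, yet occur only once in $\hat{T}$ with $\delta(\hat{T})=k$). You instead bootstrap the purely local identity $\alpha=\tau_{k+l+1}$ from Lemma~\ref{SymmCodeUb1} into a cyclic induction giving $\tau_{s+k+l}=\tau_s$ for all $s$, so that $T$ would be invariant under rotation by both $k+l$ and $N/2=2k+l+1$, hence by $\gcd(l-1,k+1)\le l-1$, leaving too few distinct transition elements to realize $d$. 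The key verifications all go through: the shifted window is a rotation of $\omega_1$ by the inductive hypothesis, Lemma~\ref{XiUpperBd} is stated for arbitrary cyclic segments so it applies at every shift, $\delta$ of the $(2k+l+2)$-window is forced to be $k+1$ because its complement still has length $2k+l\ge k$, and $\xi(C)=k+1$ pins the repeated element to position $r+k+l+1$; the only point demanding care is the index bookkeeping as the induction wraps around $\mathbb{Z}/N\mathbb{Z}$, which you flag. Your argument buys a cleaner structural explanation (excess length forces impossible periodicity) and, notably, nowhere uses the parity of $k$ and $l$ beyond what Lemma~\ref{XiUpperBd} needs, so it would apparently also sharpen Lemma~\ref{SymmCodeUb1} to $4k+2l$ in the $k$ even, $l$ odd case --- consistent with Theorem~\ref{DouglasLengthThm}(i) at $l=3$ but stronger than what the paper claims there; the paper's argument is more local and avoids the cyclic induction entirely.
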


\begin{proof}
From Lemma \ref{SymmCodeUb1} we have $|C|\le 4k+2(l+1)$.  Assume for contradiction that $|C|=4k+2(l+1)$, then without loss of generality the transition sequence $T$ of $C$ has the form:
\small
\[
T=\underbrace{1,2,\ldots,l}_{\omega_1},\underbrace{l+1,\ldots,k+l}_{\omega_2},\underbrace{\alpha_1,\ldots,\alpha_k}_{\omega_3},\alpha_{k+1},
1,2,\ldots,l,l+1,\ldots,k+l,\alpha_1,\ldots,\alpha_k,\alpha_{k+1}.
\]
\normalsize
Let $\hat{T}$ denote the segment $(\omega_1,\omega_2,\omega_3,\alpha_{k+1})$.
From the Lemma \ref{XiUpperBd} we know that all $d=\frac{3k}{2}+\frac{l+1}{2}$ transition elements are used in $\hat{T}$, that each transition appears once or twice in $\hat{T}$, and that $\delta(\hat{T})=k$.  From this we have $k\le \delta((\omega_1,\omega_2,\omega_3))=\delta(\hat{T})\pm 1$, so $\delta((\omega_1,\omega_2,\omega_3))=k+1$ and $\alpha_{k+1}\in (\omega_1,\omega_2,\omega_3)$.  Hence all transition elements appear in $(\omega_1,\omega_2,\omega_3)$.  Furthermore, since $\xi(C)=k+1$, $\alpha_{k+1}\not\in \omega_3$, so $\alpha_{k+1}\in (\omega_1,\omega_2)$.

Define $\beta$ as $(\omega_2,\omega_3)$.  Clearly $|\beta|=2k$, and since $|C|=4k+2(l+1)$ this means $\delta(\beta)\ge k$.  A precise value for $\delta(\beta)$ is given by

\begin{equation}
\label{eqn1}
\delta(\beta)=(d-(k+l))+s_1+(k-s_2)
\end{equation}

where $d-(k+l)$ is the number of transition elements in $\{1,\ldots,d\}$ that have not appeared in $(\omega_1,\omega_2)$, $s_1$ is the number of transitions in $\omega_3$ that appear in $\omega_1$, and $s_2$ is the number of transitions in $\omega_3$ that appear in $\omega_2$.  Furthermore

\begin{equation}
\label{eqn2}
|\omega_3|=k=(d-(k+l))+s_1+s_2.
\end{equation}

From $(\ref{eqn1})$ and $(\ref{eqn2})$ we deduce the following relationships:
\begin{equation}
\label{eqn3}
(1/2)(k+l-1)=s_1+s_2
\end{equation}

\begin{equation}
\label{eqn4}
s_1\ge \lceil (l-1)/2 \rceil = l/2
\end{equation}

\begin{equation}
\label{eqn5}
s_2 \le \lfloor (1/2)k\rfloor = (1/2)(k-1).
\end{equation}

Now consider the segment $\gamma=(\omega_3,\alpha_{k+1},\omega_1)$ of $T$.  This segment has size $k+1+l$ and so $k\le \delta(\gamma)$.  An upper bound on $\delta(\gamma)$ is given by $(d-(k+l))+(l-s_1)+s_2+1$.  Thus we require
\begin{equation}
\label{eqn6}
2k-d+s_1\le s_2+1.
\end{equation}

The only values of $s_1$ and $s_2$ consistent with $(\ref{eqn3})-(\ref{eqn6})$ are $s_1=\frac{l}{2}$ and $s_2=\frac{k-1}{2}$.  Plugging these into $(\ref{eqn1})$ we get $\delta(\beta)=k+1$.  Recall that we have shown $\alpha_{k+1}\in (\omega_1,\omega_2)$.  Clearly $\alpha_{k+1}\not \in \omega_1$ since $(\alpha_{k+1},\omega_1)$ is a segment of $T$ of size $<k$ and thus all transitions in $(\alpha_{k+1},\omega_1)$ must be distinct since $C$ has spread $k$.

Therefore $\alpha_{k+1}\in \omega_2$ and so $\delta(\beta,\alpha_{k+1})=\delta(\beta)-1=k$.  Thus $\delta(\beta,\alpha_{k+1},1)$ must equal $k+1$ and hence $1\not \in \{\alpha_1,\ldots,\alpha_{k+1}\}$.  But since $\delta(\hat{T})=k$ and $1$ occurs only once in $\hat{T}$, this implies $\delta((2,\ldots,l,\omega_2,\alpha_1,\ldots,\alpha_{k+1}))=k-1$ a contradiction.  

Therefore we reach a contradiction and conclude that $|C|\le 4k+2l$.
\end{proof}

Unlike the case with the upper bounds of Propositions \ref{DouglasRem5} and \ref{DouglasRem6}, the upper bound of $4k+2l$ can be achieved.  This is shown in the following example which is based on a canonical augmentation approach similar to \cite{Ostergard}.

\begin{example}
\label{Ex1}
We wish to see if a symmetric $(16,9,9+4)$ circuit code of length $44$ exists.  If such a code $C$ exists then, without loss of generality, it has transition sequence
\[
T=\underbrace{1,\ldots,4}_{\omega_1},\underbrace{5,\ldots,13}_{\omega_2},\underbrace{\alpha_1,\ldots,\alpha_9}_{\omega_3},1\ldots,4,5\ldots,13,\alpha_1,\ldots,\alpha_9.
\]

Since $T$ is symmetric, all $16$ transition elements appear in $(\omega_1,\omega_2,\omega_3)$.  By construction and the fact that $\xi(C)=10$, each transition element appears once or twice in $(\omega_1,\omega_2,\omega_3)$.
Using $\delta((\omega_2,\omega_3))\ge 9$ and $\delta((\omega_3,1,\ldots,4))\ge 9$ and the proof approach of Lemma \ref{XiUpperBd} we find that: $2$ members of $\omega_3$ are in $\omega_1$, $4$ members of $\omega_3$ are in $\omega_2$, and $3$ members of $\omega_3$ are in $\{14,15,16\}$.  From the structure of $T$ we also deduce that $\alpha_i \not\in \{1,\ldots,i\}$ for $1\le i\le 9$.  Hence $2$ members of $\{2,3,4\}$ are in $\{\alpha_1,\alpha_2,\alpha_3\}$.  This greatly reduces the search space for possible $(16,9,9+4)$ codes.

Using canonical augmentation of $(\omega_1,\omega_2)$ plus the refinements mentioned above, we find the following $(16,9,9+4)$ code of length $44$:
\small
\[
T=1,2,3,4,5,6,7,8,9,10,11,12,13,2,4,6,14,8,15,10,16,12,
\]
\[
1,2,3,4,5,6,7,8,9,10,11,12,13,2,4,6,14,8,15,10,16,12.
\]
\normalsize
\end{example}

Example \ref{Ex1} suggests a general structure for symmetric $(\frac{3k}{2}+\frac{l+1}{2},k,k+l)$ codes with $k$ odd and $l$ even $\ge 2$ having length $4k+2l$.  Define the code $C$ by the symmetric transition sequence whose first half is as follows:
\begin{equation}
\label{Form4k2l}
\underbrace{1,\ldots,k+l}_{\omega_1 \text{ length }=k+l},\underbrace{2,4,\ldots,2l-2}_{\omega_2 \text{ length }=l-1},\underbrace{\gamma_1,\beta_{1},\gamma_2,\beta_{2},\ldots,\gamma_{d-(k+l)},\beta_{d-(k+l)}}_{\omega_3 \text{ length }=k+1-l}.
\end{equation}

Here $\beta_1$ through $\beta_{d-(k+l)}$ are $2l,2l+2,\ldots,k+l-1$ and $\gamma_1$ through $\gamma_{d-(k+l)}$ are $k+l+1,k+l+2,\ldots,d$.

\begin{lem}
\label{lem4k2l}
The symmetric transition sequence in (\ref{Form4k2l}) defines a circuit code $C\in \mathcal{F}(\frac{3k}{2}+\frac{l+1}{2},k,k+l)$ for $k$ odd and $l$ even $\ge 2$ having length $4k+2l$.
\end{lem}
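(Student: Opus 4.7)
The plan is to verify the three defining properties of a member of $\mathcal{F}(\frac{3k}{2}+\frac{l+1}{2},k,k+l)$ for the transition sequence $T$ in (\ref{Form4k2l}): length $4k+2l$, maximum bit run $\ge k+l$, and spread $k$. Length and bit run are immediate: the half-length is $(k+l)+(l-1)+(k+1-l)=2k+l$, so $|C|=4k+2l$, and $\omega_1$ alone is a run of $k+l$ distinct transitions. For $T$ to close into a circuit each element must appear an even number of times, and a direct tally of (\ref{Form4k2l}) shows that, in one half, odd $t\in[1,k+l]$ and each $\gamma$-element appear once while even $t\in[2,2l-2]$ and even $t\in[2l,k+l-1]$ each appear twice (once in $\omega_1$, once in either $\omega_2$ or as a $\beta$ in $\omega_3$); doubling via symmetry produces even totals.

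The real work is the spread. By Lemma \ref{KleeLemma2}, it suffices to show $\delta(\hat T)\ge k$ for every segment $\hat T$ with $k\le|\hat T|\le N-k$. Since $\delta(\hat T)=\delta(\hat T^\complement)$ and the code is symmetric, I restrict attention to segments $\hat T$ of length $\le N/2=2k+l$ whose starting index lies in the first half. A key observation---consequence of the multiplicity table together with the fact that every element's consecutive occurrences in $T$ lie at distance at least $k+1$ while three consecutive occurrences span exactly $N/2$---is that each transition element appears $0$, $1$, or $2$ times in any such $\hat T$. Writing $p$ and $q$ for the number of elements appearing twice and once respectively in $\hat T$, we get $|\hat T|=q+2p$ and $\delta(\hat T)=q$, so the claim reduces to $p\le \tfrac{1}{2}(|\hat T|-k)$.

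I would then prove this inequality by case analysis on the blocks containing the starting and ending positions of $\hat T$. The captured pairs come in three flavors: type~A is an $\omega_2$-pair (element $2i$ in $\omega_1$ paired with its occurrence in $\omega_2$, gap $k+l-i$ for some $i\in\{1,\dots,l-1\}$); type~B is a $\beta$-pair (element $\beta_j$ paired between $\omega_1$ and $\omega_3$, gap $k+1$); type~C is a cross-midpoint pair (an $\omega_2$- or $\omega_3$-occurrence in one half paired with the $\omega_1$-occurrence in the other half). In each starting/ending configuration, extending $\hat T$ by one additional captured pair forces $|\hat T|$ to grow by at least $2$: successive $\beta$-pairs sit $2$ positions apart in $\omega_1$ and alternate with $\gamma$'s in $\omega_3$, the $\omega_2$-pairs are nested with gaps increasing by $1$, and type-C pairs require $\hat T$ to extend farther past the midpoint. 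Summing over the types yields $p\le \tfrac{1}{2}(|\hat T|-k)$ in every case.

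The main obstacle is the boundary bookkeeping, particularly for segments of length close to the extremal value $2k+l$ that straddle the midpoint, where the bound is tightest and all three pair types can be present simultaneously. I would manage this by first proving the inequality for segments contained in a single half (using only types A and B), then extending to midpoint-straddling segments by observing that each type-C pair consumes at least one transition on each side of the midpoint, so the one-half bound applied independently to each side combines to give the full bound. Specializing to $k=9$, $l=4$, $d=16$ should reproduce Example \ref{Ex1}, providing a useful check.
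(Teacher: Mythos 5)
Your overall framework is sound and in fact close to the paper's: the paper also verifies length and bit run trivially and then establishes the spread by a case analysis on which blocks $\omega_1,\omega_2,\omega_3$ (in the first or second half) contain the endpoints of the segment $\hat T$, counting the set $D$ of repeated transitions; your inequality $p\le\tfrac12(|\hat T|-k)$ is exactly $\delta(\hat T)=|\hat T|-2|D|\ge k$ rearranged. Your preliminary reductions are all correct: the multiplicity tally (each even element of $[2,k+l-1]$ twice per half, everything else once per half) does show the walk closes; consecutive occurrences of any element are at distance $\ge k+1$ and any three consecutive occurrences span $2k+l+1$ positions, so every element appears at most twice in a segment of length $\le N/2$ and $\delta(\hat T)=q$; and the single-half analysis (your types A and B) can indeed be completed by the positional arithmetic you indicate.

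The genuine gap is the midpoint-straddling case, which is precisely where the bound is tight. Your proposed combination -- ``the one-half bound applied independently to each side'' plus ``each type-C pair consumes at least one transition on each side'' -- does not yield $|\hat T|\ge k+2(p_1+p_2+p_C)$ under either reading. If $p_1,p_2\ge 1$, the two one-half bounds give $L_1+L_2\ge 2k+2(p_1+p_2)$, which covers the cross pairs only if $p_C\le k/2$; but a priori as many as $(k+l-1)/2>k/2$ elements can form cross pairs, so this needs a separate argument. If instead you only use ``one transition per side,'' you get $L_i\ge p_C$, which supplies $2p_C$ but loses the additive $k$ and the $2p_1+2p_2$. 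What actually makes the straddling case work is that the two occurrences of each cross pair are $\ge k+1$ apart \emph{and} the left endpoints (resp.\ right endpoints) of distinct cross pairs are themselves spaced out by the block structure, and moreover $p_1$ and $p_C$ are not independent (when $p_C$ is large the first-half portion of $\hat T$ lies inside $(\omega_2,\omega_3)$, where no complete pair fits, forcing $p_1=0$). Establishing this requires the same explicit position bookkeeping you did for types A and B, not a formal combination of the one-sided bounds. For comparison, the paper's Case II avoids this by using the symmetry $\tau_i=\tau_{i+N/2}$ to rearrange a straddling segment into a subsequence of the canonical segment $(1,\ldots,\beta_{d-(k+l)})$, whose $\delta$ equals $k+1$ by construction, and then tracking how the omitted transitions (which alternate between duplicated and unique elements) change $\delta$ by at most one; some analogue of that argument, or a full positional analysis of the cross pairs, is needed to close your proof.
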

\begin{proof}
Let $T$ be the symmetric transition sequence whose first half is defined as in (\ref{Form4k2l}) and let $C$ be the attendant $d (=\frac{3k}{2}+\frac{l+1}{2})$-dimensional circuit code defined by $T$.  Evidently $\phi(C)\ge k+l$ and $|C|=4k+2l$, so all that needs to be verified is the spread.

Partition $T$ as $(\underbrace{\omega_1,\omega_2,\omega_3}_{A},\underbrace{\omega_1,\omega_2,\omega_3}_{B})$ where $\omega_1=1,\ldots,k+l$, $\omega_2=2,4,\ldots,2l-2$, and $\omega_3=\gamma_1,\ldots,\beta_{d-(k+l)}$.  Select $x$ and $y$ arbitrarily from all pairs of vertices $u,v\in C$ with $d_C(u,v)\ge k$.  We will prove that $d_{I(d)}(x,y)\ge k$, hence by Lemma \ref{KleeLemma2} $C$ has spread $\ge k$.  Let $\hat{T}=(\tau_i,\ldots,\tau_{j-1})$ denote a shortest segment in $T$ between $x$ and $y$.

\textbf{Case $I$.}  $\hat{T}$ is a segment of $A$.\\
Hence $\tau_i\in \omega_m$ and $\tau_{j-1}\in \omega_n$ with $m\le n$ (else $|\hat{T}|>2k+l$).

\textbf{Subcase $I.1$.}  $\tau_i$ and $\tau_{j-1}$ are both in $\omega_s$ for $s\in \{1,2,3\}$.\\
Each $\omega_s$ contains no repeated transition elements, hence $\delta(\hat{T})=|\hat{T}|=d_C(x,y)\ge k$.

\textbf{Subcase $I.2$} $\tau_i\in \omega_1 \text{ and }\tau_{j-1}\in \omega_2$.\\
Then $\tau_i=a\in \{1,\ldots,k+l\}$ and $\tau_{j-1}=2b\in \{2,4,\ldots,2l-2\}$.  We may assume $2b\ge a$ or else no transitions occur more than once (and hence $\delta(\hat{T})=|\hat{T}|\ge k$).  Only those transitions in $D=\{2\lceil \frac{a}{2}\rceil,2\lceil \frac{a}{2}\rceil+2,\ldots,2b\}$ are repeated and thus $\delta(\hat{T})=(k+l-a+1)+b-2|D|$.  The size of $D$ is $b-\lceil \frac{a}{2}\rceil +1$, so $\delta(\hat{T})=(k+l-a+1)+b-2(b-\lceil \frac{a}{2}\rceil +1)\ge k+l-b-1\ge k$.

\textbf{Subcase $I.3$.} $\tau_i \in \omega_1 \text{ and }\tau_{j-1}\in \omega_3$.\\
Then $\tau_i=a\in \omega_1$ and $\tau_{j-1}=\beta_s$ or $\gamma_s\in \omega_3$.  Consider $\tilde{T}=(\tau_i=a,\ldots,k+l,2,4,\ldots,\beta_u)$ where $u=s$ if $\tau_{j-1}=\beta_s$ and $u=s-1$ if $\tau_{j-1}=\gamma_s$ (with $\beta_0=2l-2$).  Since each $\gamma_u$ occurs only once in $A$ it is clear that $\delta(\hat{T})\ge \delta(\tilde{T})$.  We may assume $2\lceil \frac{a}{2}\rceil \le \beta_u$ since otherwise no transitions are repeated in $\tilde{T}$ and hence in $\hat{T}$, implying $\delta(\hat{T})\ge k$.  Only the transitions in $D=\{2\lceil \frac{a}{2}\rceil, 2\lceil \frac{a}{2}\rceil +2,\ldots, \beta_u=2(l+u-1)\}$ are repeated.  Since $|D|=(l+u-1 -\lceil \frac{a}{2}\rceil)+1$ we have $\delta(\tilde{T})=(k+l-a+1)+(l-1)+2u-2|D|\ge k$.

\textbf{Subcase $I.4$.}  $\tau_i\in \omega_2 \text{ and } \tau_{j-1}\in \omega_3$.\\
Since all transitions in $(\omega_2,\omega_3)$ are distinct, $\delta(\hat{T})=|\hat{T}|=d_C(x,y)\ge k$.

Clearly the analysis is the same if both $\tau_i$ and $\tau_{j-1} \in B$.\\

\textbf{Case $II$.}  $\tau_i\in A \text{ and } \tau_{j-1}\in B$.\\
In this case we have $\tau_i\in \omega_m$ and $\tau_{j-1}\in \omega_n$ where $m\ge n$ (else $|\hat{T}|>2k+l$ and $\hat{T}$ is not a shortest segment in $T$ between $x$ and $y$.)

\textbf{Subcase $II.1$.}  $\tau_i \text { and } \tau_{j-1}$ are both in $\omega_s$ for $s\in \{1,2,3\}$.\\
Then $\tau_i$ is the $a$th element of $\omega_s$ in $A$ and $\tau_{j-1}$ is the $b$th element of $\omega_s$ in $B$, and since $\hat{T}$ is a shortest segment between $x$ and $y$ in $T$ we have $b<a$.  Since $\delta(\hat{T})$ does not depend on the ordering of the transitions in $\hat{T}$, rearrange $\hat{T}$ as: $\hat{T}=\underbrace{1,\ldots,\tau_{j-1}}_{\text{was in } B},\underbrace{\tau_i,\ldots,\beta_{d-(k+l)}}_{\text{was in } A}.$
Define $m$ as $j-(2k+l)$, since $C$ is symmetric the sequence of values of the transitions in $\hat{T}$ is the same as in $T'=(\tau_1,\tau_2,\ldots,\tau_{m-1},\tau_i,\tau_{i+1},\ldots,\tau_{2k+l})$ (note since $b<a$ we have $m-1<i$) and thus $\delta(\hat{T})=\delta(T')$.  Furthermore $T'$ is a subsequence of ${\tilde{T}=(\tau_1=1,\ldots,\tau_{2k+l}=\beta_{d-(k+l)})}$ so the only transitions occurring twice are those even elements in $\{1,\ldots,k+l\}$ and no transition occurs three times or more.  Observe that by construction of $T$, $\delta(\tilde{T})=k+1$.  Also note that the only transitions in $\tilde{T}$ absent from $T'$ are $(\tau_m,\tau_{m+1},\ldots,\tau_{i-1})$.

Suppose $s=1$, then $(\tau_m,\tau_{m+1},\ldots,\tau_{i-1})=(\tau_{m-1}+1,\tau_{m-1}+2,\ldots,\tau_i -1)$.  Let $D=\{\tau_{m-1}+1,\ldots,\tau_i -1\}$, then $\delta(\hat{T})=\delta(\tilde{T})-|\{t\in D \ | t \text{ odd }\}| + |\{ t\in D \ | t \text{ even }\}| \ge k+1 - |\{ t \in D \ | t \text{ odd }\}| + (|\{ t \in D \ | t \text{ odd }\}| - 1) \ge k$.

Suppose $s=2$, then trivially $\delta(\hat{T})\ge \delta(\tilde{T})=k+1$ as we are only removing single instances of transitions that appear twice in $1,\ldots,\beta_{d-(k+l)}$.

Finally, suppose $s=3$.  Each transition in $D_1=\{\tau_m,\tau_{m+1},\ldots,\tau_{i-1}\}\cap \{1,\ldots,k+l\}$ increases $\delta(\hat{T})$ by $1$ relative to $\delta(\tilde{T})$ as we are removing from $\tilde{T}$ a single instance of a duplicated transition.  Each transition in $D_2=\{\tau_m,\tau_{m+1},\ldots,\tau_{i-1}\}-\{1,\ldots,k+l\}$ decreases $\delta(\hat{T})$ by $1$ relative to $\delta(\tilde{T})$, as we are removing from $\tilde{T}$ a transition that only occurred once.  Since the elements in $D_1$ and $D_2$ alternate in $(\tau_m,\tau_{m+1},\ldots,\tau_{i-1})$ (as the $\beta_u$'s and $\gamma_u$'s) we see that $\delta(\hat{T})\ge \delta(\tilde{T})-1=k$.

\textbf{Subcase $II.2$.}  $\tau_i\in \omega_3 \text{ and } \tau_{j-1}\in \omega_1$.\\
Then $\tau_i$ is the $a$th element of $\omega_3$ in $A$ and $\tau_{j-1}$ is the $b$th element of $\omega_1$ in $B$.  If $b<\beta_{\lceil \frac{a}{2}\rceil}=2(l+\lceil \frac{a}{2}\rceil-1)$ then all transitions in $\hat{T}$ are distinct, so $\delta(\hat{T})\ge k$.  Otherwise, only those transitions in $D=\{ \beta_{\lceil \frac{a}{2}\rceil}=2(l+\lceil \frac{a}{2}\rceil -1),2(l+\lceil \frac{a}{2}\rceil),\ldots,2\lfloor\frac{b}{2}\rfloor\}$ are repeated.  Since $|D|\le (b-2(l+\lceil \frac{a}{2}\rceil -1)/2 +1$ we have $\delta(\hat{T})\ge (k+1-l-a+1)+b-2|D|\ge k+l-2\ge k$.

\textbf{Subcase $II.3$.}  $\tau_i\in \omega_3 \text{ and }\tau_{j-1}\in \omega_2$.\\
Then $\tau_i$ is the $a$th element of $\omega_3$ and $\tau_{j-1}=2b\in \{2,4,\ldots,2l-2\}$.  Adding each element in $\{2b+2,2b+4,\ldots,2l-2\}$ to $\hat{T}$ to get a new segment $\tilde{T}=(\tau_i,\ldots,\beta_{d-(k+l)},\omega_1,\omega_2)$ results in $\delta(\hat{T})\ge \delta(\tilde{T})$ since each of the added transitions occurred exactly once in $\hat{T}$.  If $a=1$ then $\tilde{T}$ can be rearranged as $(1,\ldots,\beta_{d-(k+l)})$ and $\delta(\hat{T})\ge \delta(\tilde{T})>k$.  If $a>1$ and $|\tilde{T}|=2k+l-1$, update $\tilde{T}\rightarrow (\tilde{T},\gamma_1)$.  Now $\delta(\hat{T})\ge \delta(\tilde{T})-1$ and $\delta(\tilde{T})=\delta(1,\ldots,\beta_{d-(k+l)})$ (after rearrangement) $=k+1$.  Otherwise ($a>1$ and $|\tilde{T}|\le 2k+l-2$), update $\tilde{T} \rightarrow (\tilde{T},\gamma_1,\beta_1)$.  Then $\delta(\tilde{T})=\delta(\tau_i,\ldots,\beta_{d-(k+l)},\omega_1,\omega_2)$ $+1$ (from $\gamma_1$) $-1$ (from $\beta_1$) $\le \delta(\hat{T})$.  Now $\tilde{T}$ falls under subcase $II.1$ (with $s=3$), hence $k\le \delta(\tilde{T})\le \delta(\hat{T})$.  In both $a>1$ cases we have $\delta(\hat{T})\ge k$.

\textbf{Subcase $II.4$.}  $\tau_i\in \omega_2 \text{ and } \tau_{j-1}\in \omega_1$.\\
Then $\tau_i=2a\in \{2,4,\ldots,2l-2\}$ and $\tau_{j-1}$ is the $b$th element of $\omega_1$.  If $2a>b$ then no transitions are repeated in $\hat{T}$ and $\delta(\hat{T})=|\hat{T}|=d_C(x,y)\ge k$.  Otherwise ($2a\le b$) add $2,4,\ldots,2a-2$ to the beginning of $\hat{T}$ to get a new segment $\tilde{T}=(\omega_2,\omega_3,1,\ldots,b=\tau_{j-1})$ with $\delta(\hat{T})\ge \delta(\tilde{T})$.  If $b=k+l$ then $\tilde{T}$ can be rearranged as $(1,\ldots,\beta_{d-(k+l)})$ and $\delta(\hat{T})\ge \delta(\tilde{T})>k$.  If $b<k+l$ and $|\tilde{T}|=2k+l-1$, update $\tilde{T}\rightarrow (k+l,\tilde{T})$.  Now $\delta(\hat{T})\ge \delta(\tilde{T})-1$ and $\delta(\tilde{T})=\delta(1,\ldots,\beta_{d-(k+l)})=k+1$.  Otherwise ($b<k+l$ and $|\tilde{T}|\le 2k+l-2$), update $\tilde{T}\rightarrow (k+l-1,k+l,\tilde{T})$.  Then $\delta(\tilde{T})=\delta(\omega_2,\omega_3,1,\ldots,b)$ $+1$ (from $k+l$) $-1$ (from $k+l-1$) $\le \delta(\hat{T})$.  Now $\tilde{T}$ falls under subcase $II.1$ (with $s=1$), hence $\delta(\tilde{T})\ge k$.  In both $b<k+l$ cases we have $\delta(\hat{T})\ge k$.

Thus in all cases $d_{I(d)}(x,y)=\delta(\hat{T})\ge k$, proving the claim.
\end{proof}

Following from Lemmas \ref{SymmCodeUb2} and \ref{lem4k2l} we have a formula for the maximum length for a certain class of symmetric $(d,k,r)$ circuit codes.

\begin{thm}
\label{thm4k2l}
Let $k$ be odd and let $l$ be even $\ge 2$ with $k\ge 2l+1$.  Then the maximum length of a symmetric code $C\in \mathcal{F}(\frac{3k}{2}+\frac{l+1}{2},k,k+l)$ is exactly $4k+2l$.
\end{thm}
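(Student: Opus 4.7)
The plan is to derive Theorem \ref{thm4k2l} as an immediate synthesis of the two preceding results, which together pin down both sides of the equality.

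First I would invoke Lemma \ref{SymmCodeUb2}, whose hypotheses ($k$ odd, $l$ even $\ge 2$, $k \ge 2l+1$) match exactly those of the theorem. This yields the upper bound: every symmetric $C \in \mathcal{F}(\frac{3k}{2}+\frac{l+1}{2},k,k+l)$ satisfies $|C| \le 4k+2l$. In other words, the maximum length in question is at most $4k+2l$.

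For the matching lower bound, I would appeal to Lemma \ref{lem4k2l}, which produces an explicit symmetric transition sequence (the one displayed in (\ref{Form4k2l})) whose associated circuit code belongs to $\mathcal{F}(\frac{3k}{2}+\frac{l+1}{2},k,k+l)$ and has length exactly $4k+2l$. Thus the upper bound is attained, and the two bounds coincide, giving the claimed exact value.

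There is no real obstacle here: all of the substantive work (the structural upper-bound argument using the $(\omega_1,\omega_2)$ decomposition and the equations (\ref{eqn1})--(\ref{eqn6}), as well as the painstaking case analysis verifying that the construction (\ref{Form4k2l}) actually has spread $k$) has already been carried out in Lemmas \ref{SymmCodeUb2} and \ref{lem4k2l}. The proof of Theorem \ref{thm4k2l} is therefore a one-line assembly: combine the upper bound from Lemma \ref{SymmCodeUb2} with the construction from Lemma \ref{lem4k2l}.
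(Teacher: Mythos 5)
Your proposal is correct and matches the paper exactly: the theorem is stated as following immediately from Lemma \ref{SymmCodeUb2} (the upper bound $|C|\le 4k+2l$) together with Lemma \ref{lem4k2l} (the explicit construction attaining that length). No further argument is needed.
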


The best known lower bound on $K(d,k)$ for general $d$ and odd $k$ was given in \cite{Singleton} (stronger bounds are known for $k=2,3,4$ \cite{AK, Singleton, Byrnes} or for fixed $k$ as $d \rightarrow \infty$ \cite{PN}):

\begin{equation}
\label{singleton_eqn}
K(d,k) \ge (k+1)2^{\lfloor \frac{2d}{k+1}\rfloor -1}, \text{ when } k \text{ odd and }\left \lfloor \frac{2d}{k+1}\right \rfloor \ge 2.
\end{equation}

For $k$ and $l$ satisfying the conditions of Theorem \ref{thm4k2l} and $d=\frac{3k}{2}+\frac{l+1}{2}$ we have $\lfloor \frac{2d}{k+1}\rfloor =3$ and thus (\ref{singleton_eqn}) implies $K(d,k)\ge 4k+4$.  Clearly any symmetric $C\in \mathcal{F}(d,k,r)$ is a $(d,k)$ circuit code and $4k+4\le 4k+2l$ for $l\ge 2$.  Thus Theorem \ref{thm4k2l} implies an improved lower bound on $K(\frac{3k}{2}+\frac{l+1}{2},k)$.

\begin{cor}
\label{cor4k2l}
Let $k$ be odd and let $l$ be even $\ge 2$ with $k\ge 2l+1$.  Then $K(\frac{3k}{2}+\frac{l+1}{2},k)\ge 4k+2l$.
\end{cor}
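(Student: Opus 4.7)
The plan is that this corollary is an immediate consequence of Theorem \ref{thm4k2l}, and essentially just repackages the existence half of that theorem for the unrestricted maximum-length problem. First, I would invoke Theorem \ref{thm4k2l} (whose existence content is supplied explicitly by the construction in (\ref{Form4k2l}) and verified in Lemma \ref{lem4k2l}): under the hypotheses that $k$ is odd, $l$ is even $\ge 2$, and $k \ge 2l+1$, there exists a symmetric circuit code $C \in \mathcal{F}(\tfrac{3k}{2}+\tfrac{l+1}{2},k,k+l)$ with $|C| = 4k+2l$.

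Next, I would note that membership in $\mathcal{F}(d,k,r)$ only imposes the additional constraint $\phi(C) \ge r$ on the bit run length; it does not weaken the spread or change the ambient dimension. Consequently $C$ is, in particular, a $(d,k)$ circuit code in the sense of Definition \ref{def1}, with $d = \tfrac{3k}{2}+\tfrac{l+1}{2}$. Since $K(d,k)$ is the maximum length of a $(d,k)$ circuit code, the existence of this specific $C$ yields $K(\tfrac{3k}{2}+\tfrac{l+1}{2},k) \ge |C| = 4k+2l$, which is the desired bound.

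There is no substantive obstacle: all of the real work has already been carried out in Lemma \ref{lem4k2l}, which explicitly exhibits a code realizing the bound and verifies its spread through the case analysis there. The only content of the corollary beyond this is the observation, already made in the paragraph preceding its statement, that for $l \ge 2$ the quantity $4k+2l$ strictly exceeds Singleton's general lower bound $(k+1)2^{\lfloor 2d/(k+1)\rfloor - 1} = 4k+4$ that applies at $d = \tfrac{3k}{2}+\tfrac{l+1}{2}$, so the bound just obtained is a genuine improvement in the regime covered by the hypotheses.
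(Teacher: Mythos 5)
Your proposal is correct and matches the paper's own argument: the paper likewise obtains the bound by noting that the symmetric code of length $4k+2l$ guaranteed by Theorem \ref{thm4k2l} (via the construction of Lemma \ref{lem4k2l}) is in particular a $(\frac{3k}{2}+\frac{l+1}{2},k)$ circuit code, so $K(\frac{3k}{2}+\frac{l+1}{2},k)\ge 4k+2l$, with the comparison to Singleton's bound $4k+4$ serving only to show the improvement. No gaps.
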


\begin{rem}
\label{rem1}
In the proof of Theorem \ref{DouglasLengthThm} $(ii)$ in \cite{Douglas2} it is shown that all maximum length $(\lfloor \frac{3k}{2}\rfloor+2,k)$ codes ($k$ odd) are isomorphic.  This is not true for $K(\lfloor \frac{3k}{2}\rfloor +3,k)$ codes with $k$ odd.  Consider the $(16,9)$ code $C$ defined by the transition sequence $T$ of Example \ref{Ex1}, it has length $44=K(16,9)$ by Theorem \ref{DouglasLengthThm} $(iii)$.  Another $(16,9)$ code of length $44$ is $C'$ given by
\small
\[
T'=1,11,2,12,3,13,4,14,5,16,15,6,11,7,12,8,13,9,14,16,10,15,
\]
\[
1,11,2,12,3,13,4,14,5,16,15,6,11,7,12,8,13,9,14,16,10,15.
\]
\normalsize
Because $\phi(C)=13$ and $\phi(C')=12$ the two codes are not isomorphic.
\end{rem}

An interesting implication of Theorem \ref{thm4k2l} and Theorem \ref{DouglasLengthThm} $(ii)$ and $(iii)$ is that $L(\frac{3k}{2}+\frac{l+1}{2},k,k+l)=K(\frac{3k}{2}+\frac{l+1}{2},k)$ for odd $k \ge 9$ and $l = 2 \text{ or } 4$.  For such $k$ and $l$, the additional constraints that $\phi(C)\ge k+l$ or even that $C$ be symmetric do not affect the maximum code length.  Since (Remark \ref{rem1}) not all such maximum length codes are isomorphic or satisfy $\phi(C)\ge k+l$, this implication appears non-trivial and leads us to conjecture the following generalization of Theorem \ref{DouglasLengthThm}.

\begin{conj}
\label{conj4k2l}
Let $k$ be odd $\ge 9$ and $l$ even $\ge 2$ with $k\ge 2l+1$.  Then there exists symmetric $C\in \mathcal{F}(\frac{3k}{2}+\frac{l+1}{2},k,k+l)$ attaining length $K(d,k)$.  Hence $K(\frac{3k}{2}+\frac{l+1}{2},k)=4k+2l$ for such $(k,l)$ pairs.
\end{conj}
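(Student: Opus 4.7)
The plan is to match the lower bound from Lemma~\ref{lem4k2l} (which already exhibits a symmetric $C\in\mathcal{F}(d,k,k+l)$ of length $4k+2l$, where $d=\frac{3k}{2}+\frac{l+1}{2}$) with an upper bound $K(d,k)\le 4k+2l$; once this is done, both assertions of the conjecture follow, since the code of Lemma~\ref{lem4k2l} is simultaneously symmetric, in $\mathcal{F}(d,k,k+l)$, and of length $K(d,k)$. I would argue by contradiction, assuming $C$ is an arbitrary $(d,k)$ circuit code (not assumed symmetric or in $\mathcal{F}(d,k,k+l)$) with $|C|\ge 4k+2l+1$.

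The first and most delicate step is to show that any such $C$ must lie in $\mathcal{F}(d,k,k+l)$, i.e.\ that $\phi(C)\ge k+l$. Note that Theorem~\ref{SingletonDimensionThm} already forces $\phi(C)\le k+l$ in this dimension, since $d=k+1+\lfloor(k+l)/2\rfloor$; so the claim is really $\phi(C)=k+l$. To establish it, I would take the longest distinct-transition segment of $T$ (of length $\phi(C)\le k+l-1$ under the contradiction hypothesis), extend it by $k+1$ further transitions using $\xi(C)\ge k+1$, and push the Singleton-style parity and counting argument sharpened in Lemma~\ref{XiUpperBd} to this sub-maximal regime. The expected conclusion is a global length bound strictly less than $4k+2l+1$, contradicting maximality.

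Once $C\in\mathcal{F}(d,k,k+l)$ is in hand, Lemma~\ref{XiUpperBd} forces $\xi(C)=k+1$ and gives the rigid $(\omega_1,\omega_2)$ block structure at every cyclic offset. The next step is to propagate this rigidity: every window of $2k+l+1$ consecutive transitions of $T$ admits the $(\omega_1,\omega_2,\alpha)$ decomposition analyzed in Lemmas~\ref{SymmCodeUb1}--\ref{SymmCodeUb2}, and the same system of counting identities relating $s_1$ and $s_2$ forces the unique sharp solution $s_1=l/2$, $s_2=(k-1)/2$ at each offset. I would then argue that this window-by-window rigidity, combined with $|C|>4k+2l$, forces the transition sequence to repeat with period exactly $N/2$, so $C$ is symmetric. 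Lemma~\ref{SymmCodeUb2} then delivers the contradiction $|C|\le 4k+2l$.

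The main obstacle is the first step: ruling out $\phi(C)\le k+l-1$ in a code of length exceeding $4k+2l$. In the symmetric case this is effectively automatic because the $\omega_1,\omega_2$ structure is prescribed by the half-period, but in the general case the transitions can in principle be distributed without any long clean run, and excluding this seems to require either a substantially new global counting argument or a careful extension of Douglas's case analysis from \cite{Douglas2} used in Theorem~\ref{DouglasLengthThm}(ii)--(iii). This is likely why the statement has remained a conjecture, and I suspect a complete proof will need either an induction on $l$ anchored by a computer-assisted verification of small base cases, or a unified structural refinement of Theorem~\ref{SingletonDimensionThm} in the near-extremal regime $d=k+1+\lfloor(k+l)/2\rfloor$.
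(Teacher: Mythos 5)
The statement you are trying to prove is stated in the paper as a \emph{conjecture}: the paper offers no proof, explicitly notes that the cases $l=2$ and $l=4$ are only known via Theorem~\ref{DouglasLengthThm}~(ii)--(iii), and lists even the case $l=6$ as open. So there is no paper proof to compare against, and the real question is whether your proposal closes the gap. It does not; you candidly flag the main obstacle yourself, but it is worth being precise about where the two genuine gaps lie.

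First, the step ``any $(d,k)$ code with $|C|\ge 4k+2l+1$ satisfies $\phi(C)\ge k+l$'' is not an argument but a hope. Lemma~\ref{XiUpperBd} and the Singleton-style counting in the paper all \emph{start from} the hypothesis that a segment of $k+l$ distinct transitions exists; they give no leverage when $\phi(C)\le k+l-1$, because then the inequality $2d\ge 3k+l+1$ derived from $n_1+2n_2=2k+l+1$ is simply not forced (you cannot form the window $(\omega_1,\omega_2)$ of length $2k+l+1$ with the required distinctness pattern). Ruling out long codes with only short bit runs is exactly the content that Douglas's Theorems 4 and 5 supply for $l=2,4$ by heavy case analysis, and nothing in the paper generalizes it. Second, even granting $C\in\mathcal{F}(d,k,k+l)$, your route runs through Lemma~\ref{SymmCodeUb2}, which is proved \emph{only for symmetric codes}; your claim that ``window-by-window rigidity'' forces $\tau_i=\tau_{i+N/2}$ is unsupported. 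The $(\omega_1,\omega_2,\alpha)$ decomposition of Lemma~\ref{XiUpperBd} is available only at offsets where the window begins with $k+l$ distinct transitions, which need not be every offset, and even a consistent solution $s_1=l/2$, $s_2=(k-1)/2$ at many offsets does not by itself yield periodicity of the transition sequence with period $N/2$. Until both of these steps are replaced by actual arguments, the upper bound $K(d,k)\le 4k+2l$ for general (non-symmetric, unrestricted bit run) codes remains open, and with it the conjecture.
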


\section{A Characterization Theorem for Circuit Codes of Spread $k$}
Let $C$ be a $(d,k)$ circuit code of length $N$ with transition sequence $T=(\tau_1,\ldots,\tau_N)$ and transition elements $\{t_1,\ldots,t_d\}$.  For each $i\in \{1,\ldots,d\}$ let $T^i$ define the subsequence of $T$ resulting from removing $t_i$.

\begin{defn}
\label{SubcircuitCodeDef}
Given a $(d,k)$ circuit code $C$ with transition sequence $T$, the $i$th \emph{subcircuit code} $C^i$ is the walk in $I(d)$ induced by the sequence $T^i$ for $i\in \{1,\ldots,d\}$
\end{defn}

Although it may not be apparent from Definition \ref{SubcircuitCodeDef}, as long as $C^i$ is sufficiently long it is a $(d-1,k-1)$ circuit code, as shown by the following result.

\begin{thm}[Deimer \cite{Deimer} Theorem 1]
\label{DeimerThm}
Let $C$ be an $(d,k)$ circuit code of length $N$ with transition sequence $T=(\tau_1,\ldots,\tau_N)$ and transition elements $\{t_1,\ldots,t_d\}$.  Let $n_i$ denote the number of times $t_i$ occurs in $T$ for $i\in \{1,\ldots,d\}$.  If $|C^i|\ge 2(k-1)$ then $C^i$ is a $(d-1,k-1)$ circuit code of length $N-n_i$.
\end{thm}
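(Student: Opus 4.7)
The plan is to verify the two defining properties of a $(d-1,k-1)$ circuit code for $C^i$: that it is a simple cycle of length $N-n_i$ in the sub-hypercube $I(d-1)$ obtained by deleting coordinate $i$, and that it has spread at least $k-1$.

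First I would establish length and simplicity. The coordinate-$i$ projection $\pi_i\colon I(d)\to I(d-1)$ sends $C$ to a cyclic walk of length $N$ in which consecutive vertices coincide precisely at the $n_i$ positions where $\tau_j=t_i$. Collapsing each such pair gives a cyclic sequence of $N-n_i$ vertices whose transition sequence is exactly $T^i$. For simplicity, suppose two distinct vertices $x_a,x_b$ of $C$ satisfy $\pi_i(x_a)=\pi_i(x_b)$; then $d_{I(d)}(x_a,x_b)=1<k$, so by Definition~\ref{def1} we have $d_C(x_a,x_b)=1$, i.e.\ $x_a x_b$ is an edge of $C$ whose transition is $t_i$. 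Hence the only projection collisions are the $n_i$ consecutive endpoint pairs already merged by the contraction, and $C^i$ is a simple cycle of length $N-n_i$ in $I(d-1)$.

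Next I would verify spread $k-1$ via Definition~\ref{def1}. Take $y_a,y_b\in C^i$ with $d_{I(d-1)}(y_a,y_b)<k-1$ and lift them to $x_a,x_b\in C$ under $\pi_i$. Then $d_{I(d)}(x_a,x_b)\le d_{I(d-1)}(y_a,y_b)+1\le k-1<k$, so the spread of $C$ gives $d_C(x_a,x_b)=d_{I(d)}(x_a,x_b)$. Let $P$ be a shortest $C$-arc from $x_a$ to $x_b$; since $|P|$ equals the Hamming distance, each transition element appears on $P$ at most once, and in particular $t_i$ appears $0$ or $1$ times on $P$ according as $x_a,x_b$ agree or disagree in coordinate $i$. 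In the $0$-case the projected walk in $C^i$ has the same length as $P$ and $d_{I(d-1)}(y_a,y_b)=d_{I(d)}(x_a,x_b)$; in the $1$-case both the projected walk length and $d_{I(d-1)}(y_a,y_b)$ are one less than the corresponding quantities in $I(d)$. Either way $d_{C^i}(y_a,y_b)\le d_{I(d-1)}(y_a,y_b)$, and the reverse inequality is immediate, so equality holds. The hypothesis $|C^i|\ge 2(k-1)$ ensures this spread conclusion is non-degenerate and also permits invoking Lemma~\ref{KleeLemma2} on $C^i$ if one prefers that equivalent formulation.

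The main obstacle is the spread step, and the crux is a single observation: a shortest $C$-arc between two close vertices realises the Hamming distance and therefore uses each transition element at most once. This caps $t_i$ at a single occurrence on $P$, so the two subcases of its appearance align perfectly with the two parities of coordinate-$i$ agreement between $x_a$ and $x_b$, and the projection preserves the identity $d_{C^i}=d_{I(d-1)}$. Once this is in hand the rest is essentially bookkeeping.
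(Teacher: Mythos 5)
The paper does not actually prove this statement: it is quoted from Deimer (Theorem 1 of \cite{Deimer}) as a known result, so there is no in-paper proof to compare against. Judged on its own, your argument is a correct and self-contained proof. Both pillars are sound: (i) a projection collision $\pi_i(x_a)=\pi_i(x_b)$ forces $d_{I(d)}(x_a,x_b)=1<k$, hence by Definition~\ref{def1} adjacency in $C$ via a $t_i$-transition, so the contraction produces a simple cycle of length $N-n_i$; and (ii) for a pair at Hamming distance $<k$ the shortest $C$-arc $P$ satisfies $|P|=\delta(P)$, so every transition element, in particular $t_i$, occurs on $P$ at most once, which makes the parity of coordinate $i$ match the number of $t_i$'s on $P$ and yields $d_{C^i}(y_a,y_b)\le d_{I(d-1)}(y_a,y_b)$ in both cases; the reverse inequality is automatic since $C^i\subseteq I(d-1)$. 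Two points are worth stating explicitly rather than leaving implicit: the simplicity step requires $k\ge 2$ (for $k=1$ the conclusion about a spread-$0$ code is degenerate anyway), and the hypothesis $|C^i|\ge 2(k-1)$ enters your argument only to rule out degenerate short ``cycles,'' since your spread verification proceeds directly through Definition~\ref{def1} and never needs the length threshold of Lemma~\ref{KleeLemma2}; saying where (and how little) that hypothesis is used would tighten the write-up.
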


If $k\ge 1$ and $|C|>4(k-1)$ (as will be the case in Theorem \ref{CharThm}) then the requirement on $|C^i|$ is easily satisfied.  To see this, note that when $k\ge 1$ that $C$ contains no repeated vertices and hence no transition element $t_i$ can appear twice consecutively.  Thus $|C^i|\ge N/2 > 2(k-1)$ for each $i\in \{1,\ldots,d\}$.

Theorem \ref{DeimerThm}, in conjunction with our results from Section $3$ yields a corollary which appears to fill a ``gap'' in the results of Theorem \ref{DouglasLengthThm}.

\begin{cor}
\label{cor4k2l_even}
Let $k$ be even and $l$ odd $\ge 3$ with $k\ge 2l-2$.  Then $K(\frac{3k}{2}+\frac{l+1}{2},k)\ge 4k+2l$.  In particular this implies $K(\frac{3k}{2}+3,k)\ge 4k+10$ for $k$ even $\ge 8$.
\end{cor}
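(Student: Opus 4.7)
The plan is to derive this lower bound from Theorem \ref{thm4k2l} by invoking Deimer's theorem (Theorem \ref{DeimerThm}) as a reduction from spread $k+1$ to spread $k$. Concretely, I would set $k' := k+1$ (odd) and $l' := l-1$ (even, $\ge 2$ since $l \ge 3$); the hypothesis $k \ge 2l-2$ is equivalent to $k' \ge 2l'+1$, so the conditions of Lemma \ref{lem4k2l} are satisfied. A short arithmetic check shows $d' := \frac{3k'}{2}+\frac{l'+1}{2} = \frac{3k}{2}+\frac{l+1}{2}+1 = d+1$. Thus Lemma \ref{lem4k2l} supplies a symmetric $(d+1,k+1,k+l)$ circuit code $C'$ of length $4k'+2l' = 4k+2l+2$.

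Next, I would locate a transition element $t_i$ of $C'$ occurring exactly twice in its transition sequence $T'$. Inspection of the explicit form (\ref{Form4k2l}) shows that each $\gamma_u \in \{k'+l'+1,\ldots,d'\} = \{k+l+1,\ldots,d+1\}$ appears exactly once in the first half of $T'$ and nowhere else, and hence exactly twice in $T'$ by symmetry. (Equivalently, each odd transition element in $\{1,3,\ldots,k'+l'\}$ has multiplicity $2$.) Fixing $t_i$ to be one such element, so that $n_i = 2$, Theorem \ref{DeimerThm} yields a subcircuit code $C'^{\,i}$ which is a $((d+1)-1,(k+1)-1) = (d,k)$ circuit code of length $(4k+2l+2) - 2 = 4k+2l$; the side condition $|C'^{\,i}| \ge 2((k+1)-1) = 2k$ is immediate from $4k+2l > 2k$. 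This establishes $K(\frac{3k}{2}+\frac{l+1}{2},k) \ge 4k+2l$, and the ``in particular'' statement follows by specializing to $l = 5$, which gives $\frac{l+1}{2} = 3$, $2l-2 = 8$, and $4k+2l = 4k+10$.

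The main obstacle, such as it is, lies in this middle combinatorial step: an arbitrary $(d+1,k+1)$ code of length $4k+2l+2$ need not contain any transition element of multiplicity exactly $2$, so the argument genuinely depends on starting from the specific code produced by Lemma \ref{lem4k2l}, where the structural form of (\ref{Form4k2l}) makes the required multiplicity transparent. Absent such a transition element, Deimer's reduction would shave off at least $4$ from the length rather than $2$, yielding the weaker bound $4k+2l-2$. Once an $n_i = 2$ element is identified, Theorem \ref{DeimerThm} does the rest without additional work.
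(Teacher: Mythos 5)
Your proposal is correct and follows essentially the same route as the paper: set $k'=k+1$, $l'=l-1$, take the explicit symmetric code of length $4k+2l+2$ from Lemma \ref{lem4k2l}/(\ref{Form4k2l}), and delete a transition element of multiplicity two (the paper uses $t_{d'}$, one of your $\gamma_u$'s) via Theorem \ref{DeimerThm}. Your closing observation about why the multiplicity-$2$ element must come from the explicit construction is a nice touch but does not change the argument.
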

\begin{proof}
Define $k'=k+1$ and $l'=l-1$, then $k'$ is odd and $l'$ is even $\ge 2$ with $k'\ge 2l'+1$.  Let $d'=\frac{3k'}{2}+\frac{l'+1}{2}$ and let $C\in \mathcal{F}(d',k',k'+l')$ be defined by the transition sequence $T$ given in (\ref{Form4k2l}).  Then $|C|=4k'+2l'=4k+2l+2$.  Observe that $t_{d'}$ occurs only twice in $T$, so by Theorem \ref{DeimerThm}, $C^{t_{d'}}$ is a $(d'-1,k'-1)=(\frac{3k}{2}+\frac{l+1}{2},k)$ circuit code of length $4k+2l$, thus $K(\frac{3k}{2}+\frac{l+1}{2},k)\ge 4k+2l$.  Taking $l=5$ yields $K(\frac{3k}{2}+3,k)\ge 4k+10$ for $k\ge 8$.
\end{proof}

The converse of Theorem \ref{DeimerThm} also holds, giving an alternate (to Lemma \ref{KleeLemma2}) characterization for circuit codes of spread $k$ as we will now show.

\begin{thm}
\label{CharThm}
Let $k\ge 2$, $d\ge k$, and let $C$ be a $d$-dimensional circuit code with length $N>4(k-1)$.  Then $C$ has spread $k$ if and only if $C^i$ is a $(d-1,k-1)$ circuit code for $i=1,\ldots,d$.
\end{thm}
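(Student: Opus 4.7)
The forward direction is immediate: since $N>4(k-1)$ gives $|C^i|\ge N/2>2(k-1)$, Deimer's Theorem~\ref{DeimerThm} yields that each $C^i$ is a $(d-1,k-1)$ circuit code whenever $C$ has spread $k$. For the reverse direction, I plan to argue by contrapositive: assuming every $C^i$ is a $(d-1,k-1)$ circuit code but $C$ fails to have spread $k$, I will exhibit some $i$ for which $C^i$ is not a $(d-1,k-1)$ circuit code. Klee's Lemma~\ref{KleeLemma2} (applicable because $N>4(k-1)\ge 2k$ for $k\ge 2$) supplies $x,y\in C$ with $d_C(x,y)\ge k$ and $\delta:=d_{I(d)}(x,y)<k$; let $\hat{T}$ be the shorter segment of $T$ between $x$ and $y$ and set $S=\{i:x_i\ne y_i\}$, so $|S|=\delta\ge 1$, $|\hat{T}|\ge k$, and for $i\in S$ the multiplicity $c_i$ of $t_i$ in $\hat{T}$ is odd with $c_i\ge 1$.

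The argument then splits on $\delta$. For $\delta=1$, let $i_0$ be the unique element of $S$; then $x^{i_0}=y^{i_0}$, and since $d_C(x,y)\ge 2$ forces some transition in $\hat{T}$ to differ from $t_{i_0}$ (else $C$ has two consecutive equal transitions), $x$ and $y$ occupy distinct positions in $C^{i_0}$, so $C^{i_0}$ is not a simple cycle. For $\delta\ge 2$ and any $i\in S$, the pair $(x^i,y^i)$ has $d_{I(d-1)}(x^i,y^i)=\delta-1<k-1$, so the spread $k-1$ of $C^i$ (Definition~\ref{def1}) forces $d_{C^i}(x^i,y^i)=\delta-1$, while simplicity of $C^i$ gives $d_{C^i}(x^i,y^i)=\min(|\hat{T}|-c_i,|\hat{T}^\complement|-(n_i-c_i))$. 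Every $i\in S$ therefore satisfies (A) $c_i=|\hat{T}|-\delta+1$ or (B) $n_i-c_i=|\hat{T}^\complement|-\delta+1$. A short double-count using $\sum_{i\in S}c_i\le|\hat{T}|$, with (A)-indices contributing exactly $|\hat{T}|-\delta+1$ and every $i\in S$ contributing at least $1$, gives at most one (A)-index; symmetrically at most one (B)-index, so $|S|\le 2$.

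The main obstacle I expect is the residual case $\delta=2$. Writing $S=\{i,j\}$ with $i$ satisfying (A) and $j$ satisfying (B), condition (A) combined with $\sum_\ell c_\ell=|\hat{T}|$ and $c_j\ge 1$ pins down $c_i=|\hat{T}|-1$, $c_j=1$, and $c_\ell=0$ for $\ell\notin\{i,j\}$. The odd parity of $c_i$ makes $|\hat{T}|$ even. Since $\hat{T}$ then consists of $|\hat{T}|-1$ copies of $t_i$ and a single $t_j$, the ban on consecutive equal transitions in $T$ forces $|\hat{T}|\le 3$, and combined with $|\hat{T}|$ even and $|\hat{T}|\ge k\ge 2$ this gives $|\hat{T}|=2$ and hence $k=2$, whereupon $\delta\le k-1=1$ contradicts $\delta=2$. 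Every case therefore yields a contradiction, completing the reverse direction.
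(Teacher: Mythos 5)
Your proposal is correct, but it takes a genuinely different route from the paper's for the reverse direction. The paper argues directly rather than by contradiction: from the hypothesis that every $C^j$ has spread $k-1$ it first extracts one structural fact --- no transition element can appear twice within any segment of $T$ of length $\le k$ --- then fixes a \emph{single} coordinate $t_{\alpha(1)}$ in which $x$ and $y$ differ and closes with the chain $d_{I(d)}(x,y)\ge d_{I(d-1)}(x^{\alpha(1)},y^{\alpha(1)})+1\ge \min\{d_{C^{\alpha(1)}}(x^{\alpha(1)},y^{\alpha(1)}),k-1\}+1\ge k$. You instead invoke Klee's lemma to produce a violating pair, split on $\delta=d_{I(d)}(x,y)$, use the \emph{simple-cycle} requirement on $C^{i_0}$ to dispose of $\delta=1$, and for $\delta\ge 2$ exploit the exact equality $d_{C^i}(x^i,y^i)=\delta-1$ forced by Definition \ref{def1}, converting it into the (A)/(B) dichotomy and a double count over all of $S$ to get $|S|\le 2$, then kill the residual $\delta=2$ case by parity and the no-consecutive-repeats property of $T$. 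What your approach buys: it never needs the paper's ``at most once per window of length $k$'' observation (only that $C$ itself has no two equal consecutive transitions), and by taking the minimum over both arcs of $C^i$ explicitly it sidesteps the orientation subtlety the paper relegates to a footnote. What it costs is length and case analysis; the paper gets the conclusion from one coordinate of disagreement rather than all of them. Two small points you should make explicit if you write this up: the bound ``every $i\in S$ contributes at least $1$'' to $\sum_{i\in S}(n_i-c_i)$ needs the remark that $n_i$ is even while $c_i$ is odd, so $n_i-c_i\ge 1$; and the $\delta=1$ case should note that $\hat{T}^\complement$ also contains a transition other than $t_{i_0}$ (immediate from $|\hat{T}^\complement|\ge|\hat{T}|\ge 2$ and simplicity of $C$), so that $x^{i_0}$ and $y^{i_0}$ really do occur at two distinct positions of the closed walk $C^{i_0}$.
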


\begin{proof}
Suppose $C$ has spread $k$, it immediately follows that $C^i$ is a $(d-1,k-1)$ circuit code for $i=1,\ldots,d$ by Theorem \ref{DeimerThm}.  Now suppose that $C^i$ is a $(d-1,k-1)$ circuit code for $i=1,\ldots,d$, and let $x,y\in C$ such that $d_C(x,y)\ge k$.  We will show that $d_{I(d)}(x,y)\ge k$, by Lemma \ref{KleeLemma2} it follows that $C$ has spread $k$.

We begin with some definitions.  Let $T$ be the transition sequence of $C$, and let $\hat{T}$ be the shortest segment of $T$ between $x$ and $y$ and let $\hat{T}^\complement$ denote its complement.  Also let $\{t_{\alpha(1)},\ldots,t_{\alpha(m)}\}\subseteq \{t_1,\ldots,t_d\}$ denote the transition elements appearing in $\hat{T}$.  For $i=1,\ldots,m$ let $T^{\alpha(i)}$ denote the subsequence of $T$ formed by deleting $t_{\alpha(i)}$, let $C^{\alpha(i)}$ be the attendant subcircuit code, and let $x^{\alpha(i)}$ and $y^{\alpha(i)}$ be the projections of $x$ and $y$ onto this $d-1$ dimensional space.

Next, we make a crucial observation: within any segment of $T$ of length $\le k$ the transition element $t_i \ (i\in 1,\ldots,d)$ can appear at most once, otherwise $C^j$ would violate Definition \ref{def1} (for spread $k-1$) for each $j \neq i$.

Since $x\neq y$ we may assume that $t_{\alpha(1)}$ occurs an odd number of times in $\hat{T}$.  Then $d_{I(d)}(x,y)\ge d_{I(d-1)}(x^{\alpha(1)},y^{\alpha(1)})+1 \ge \min \{d_{C^{\alpha(1)}}(x^{\alpha(1)},y^{\alpha(1)}),k-1\}+1$.  Since $|\hat{T}|\ge k$ (and $|\hat{T}^\complement|\ge|\hat{T}|$) and $t_{\alpha(1)}$ can occur at most once per $k$ cyclically consecutive elements of $T$ we have $d_{C^{\alpha(1)}}(x^{\alpha(1)},y^{\alpha(1)}) \ge k-1$ and hence $d_{I(d)}(x,y)\ge k$, proving the claim\footnote{In $C^{\alpha(1)}$ the direction of the shortest segment of $T^{\alpha(1)}$ between $x^{\alpha(1)}$ and $y^{\alpha(1)}$ (e.g. from $x^{\alpha(1)}$ to $y^{\alpha(1)}$ or from $y^{\alpha(1)}$ to $x^{\alpha(1)}$) may be reversed from the direction of $\hat{T}$.  But $N$ is sufficiently large so that $d_{C^{\alpha(1)}}(x^{\alpha(1)},y^{\alpha(1)})$ remains $\ge k-1$.}.
\end{proof}

We remark that Theorem \ref{CharThm} may be of practical interest since it suggests circuit codes have a decomposition structure that might be exploitable by parallelized algorithms, for example when verifying the spread.

\section{Conclusions}
In this note we presented several new results on circuit codes.  In Section $3$ we investigated circuit codes with long bit runs, establishing the exact value of $K(d,k)$ for symmetric $C\in \mathcal{F}(\frac{3k}{2}+\frac{l+1}{2},k,k+l)$ when $k$ is odd and $l$ is even $\ge 2$ with $k\ge 2l+1$ (Theorem \ref{thm4k2l}) and an improved lower bound on $K(\frac{3k}{2}+\frac{l+1}{2},k)$ for such $(k,l)$ pairs (Corollary \ref{cor4k2l}).  In Section $4$ we proved a new characterization of circuit codes of spread $k$ (Theorem \ref{CharThm}) that is a converse to Deimer's Theorem, and improved the lower bound on $K(\frac{3k}{2}+\frac{l+1}{2},k)$ when $k$ even and $l$ odd $\ge 3$ with $k\ge 2l-2$ (Corollary \ref{cor4k2l_even}).

Several interesting questions remain open for investigation.  Proving Conjecture \ref{conj4k2l} even for the case $l=6$ would represent notable progress in computing exact values for $K(d,k)$.  Furthermore, although the structural and upper and lower bounds presented here were developed for proving Theorem \ref{thm4k2l} it would be interesting to see if they could be adapted to other types of circuit codes (e.g. single track codes \cite{Hiltgen}) or different $(d,k)$ ranges.  Finally, it would be interesting to see if Theorem \ref{CharThm} could be further developed to lead to an efficient parallel algorithm.
\\
\\
\small
\textbf{Acknowledgements: }The author thanks Stephen Chestnut, Donniell Fishkind, and Florin Spinu for generously reviewing earlier versions of this paper and for many helpful suggestions.
\normalsize

\small
\bibliographystyle{plain}
\bibliography{CircuitCodesChar}
\normalsize

\end{document}